\newtheorem{theorem}{Theorem}[section]
\newtheorem*{theoremA}{Theorem A}
\newtheorem{lemma}[theorem]{Lemma}
\newtheorem{question}[theorem]{Question}
\newtheorem{claim}[theorem]{Claim}
\newtheorem{definition}[theorem]{Definition}
\newtheorem{corollary}[theorem]{Corollary}
\newtheorem{remark}[theorem]{Remark}
\newtheorem{proposition}[theorem]{Proposition}
\begin{document}
\author{A.Ehsani, F.H.Ghane and M.Zaj}
\address{Department of Mathematics, Ferdowsi University of Mashhad, Mashhad, Iran}
\email{azam.ehsani65@stu.um.ac.ir}
\address{Department of Mathematics, Ferdowsi University of Mashhad, Mashhad, Iran}
\email{ghane@math.um.ac.ir}
\address{Department of Mathematics, Ferdowsi University of Mashhad, Mashhad, Iran}
\email{marzie.zaj@stu.um.ac.ir}
\title[on ergodicity of mostly expanding semi-group actions ] { on ergodicity of mostly expanding semi-group actions }
 \keywords{finitely generated semi-group action of diffeomorphisms, minimal semi-group action, transitive semi-group action, ergodicity of semi-group actions.} 
\maketitle
\begin{abstract}
In this work, we address ergodicity of smooth actions of finitely generated semigroups on
an $m$-dimensional closed manifold $M$. We provide sufficient conditions for such an action to be ergodic
with respect to the Lebesgue measure. Our results improve the main result in \cite{DKN}, where the ergodicity
for one dimensional fiber was proved. We will introduce Markov partition for finitely generated semi-group actions and then we establish ergodicity for a large class of finitely generated semi-groups of $C^{1+\alpha}$-diffeomorphisms that admit a Markov partition.

Moreover, we present some transitivity criteria for semi-group actions and provide a weaker form of dynamical irreducibility
 that suffices to ergodicity in our setting.
\end{abstract}
\section{introduction}\label{int}
One of the main goals in dynamical systems is to describe the typical behavior of orbits and find that how they evolve in time.
An interesting approach in this direction is given by ergodic theory, which describes the typical behavior of orbits from a measurable point of view. In this context ergodic measures play a key role in understanding the ergodic behavior of a dynamical system.

In this paper, we discuss ergodic properties of random iterations of a finite family of $C^{1+\alpha}$-diffeomorphisms defined on an $m$-dimensional compact manifold.

Some knowledge of ergodic properties of semi-group or group action of diffeomorphisms is already available, through works of several authors.
For instance, see \cite{S,DKN} for the case that the ambient space is $S^{1}$ and \cite{BFS} for general case.\\
Motivated by these results, we consider semi-groups or groups of $C^{1+\alpha}$-diffeomorphisms on a closed manifold $M$ exhibiting mostly expanding behavior and establish their ergodicity with respect to Lebesgue measure.

Write $\mathcal{G}^+$ (or $\mathcal{G}$) for the semigroup (or group) generated by a finite family of diffeomorphisms on a compact manifold $M$.

A subset $B\subset M$ is \emph{forward invariant} for $\mathcal{G}^+$ , if $f(B)\subset B$ for all
$f\in \mathcal{G}^+$.
For the group action case, we can replace it by $f(B)= B$ for all $f\in \mathcal{G}$.

An invariant probability measure $\mu$ for a semigroup (or group) action is \emph{ergodic} if for every measurable forward $\mathcal{G}^+$-invariant (or $\mathcal{G}$-invariant) subset $A\subset M$ either $\mu(A)=0$ or $\mu(M\setminus A)=0$. This definition can be extended to a measure $\mu$, which is \emph{quasi-invariant}, i.e. $f_{*}\mu$ is absolutely continuous
 with respect to $\mu$ for each element $f$ in the acting semigroup or group.
Note that for any $C^1$-map, the Lebesgue measure is quasi-invariant \cite{DKN}.

In this context, the following question is natural \cite{DKN}:
\begin{question}
 Under what conditions  a smooth action of a semigroup or group on a compact manifold is Lebesgue ergodic?
\end{question}
The main results of \cite{S} and \cite{DKN}, solve the question in the affirmative for the one dimensional case under some additional assumptions. Indeed, for finitely generated groups of $C^{1+\alpha}$ diffeomorphisms on $S^{1}$, if $\mathcal{G}$ acts minimally and its Lyapunov expansion exponent is positive then the action is ergodic. \\
We recall that the action of a semigroup (or group) is \emph{minimal} if every closed invariant subset is either empty or coincides with the whole space, equivalently total orbit of each point is dense in the ambient space.

A point $x\in M$ is said to be non-expandable provided that for every $g\in \mathcal{G}^+$ (or $g \in \mathcal{G}$) one has $m(Dg(x))\leq1$, otherwise is called an expandable point.

In \cite{DKN}, the authors proved that
if $\mathcal{G}$ is a finitely generated subgroup of $Diff^{2}(S^{1})$
such that it acts minimally and possesses a finite number of non-expandable points with some aditional assumption, then the action of $\mathcal{G}$ is ergodic with respect to the Lebesgue measure.

Several authors also worked on ergodicity of semi-group actions on compact $m$-dimensional manifolds. As shown in \cite{BFS}, ergodicity happens whenever the semi-group action is expanding minimal $C^{1+\alpha}$-conformal.\\
 The concept of expanding action in \cite{BFS} has different means: a semi-group action is \emph{expanding} provided that its inverse semigroup behaves locally expandable.

 In general, minimality \cite{F} does not imply ergodicity.
In the opposite direction ergodicity does not imply minimality. Indeed, one can easily construct examples of ergodic group actions having a global
fixed points.\\
Moreover, the main result of this paper illustrates an example of an ergodic semi-group action for which minimality is not a necessary condition.

 Here, we improve the results of \cite{S,DKN} for the case that the ambient space is $m$-dimensional, $m\geqslant 2$.
 In our setting, we allow that the non-expandable points exist and they are contained in an open region $V\subset M$.

 In fact, we contribute sufficient conditions for ergodicity that applicable to a large class of finitely generated semi-groups of $C^{1+\alpha}$-diffeomorphisms.\\
 Our hypothesis that formulated in section 2 are conditions of the type that Oliveira and Viana \cite{OV} introduced for deterministic non-uniformly expanding maps. We extend the approach used by them to semi-groups of $C^{1+\alpha}$-diffeomorphisms on a compact $m$-dimensional manifold. However, this extension involves some difficulties. For instance, Markov partitions are not known to exist for semi-group actions.

Let us note that non-uniformly expanding maps with some random noise was addressed in several works. In particular, the stochastical stability of general classes of non-uniformly expanding maps were established in \cite{AA}, \cite{AV}, \cite{Y} and \cite{BY}. Also in \cite{BFS}, the authors introduced a strong form of non-uniform expanding property for random maps. This concept in our setting has slightly different means.
 Indeed, we will establish the ergodicity of semi-group actions which equip with a weak form of non-uniform expanding property. This property is referred to as \emph{orbital non-uniform expanding property}.

 In section 6, some criteria for dynamical irreducibility is obtained.
 It provides a weak form of irreducibility in comparing with minimality. Also, the concept of weak cycle property will be introduced. This concept is equivalent to density of backward orbits for a subset of points with full Lebesgue measure and suffices to establish ergodicity from local ergodicity.
 Finally, we construct some examples that fit our assumptions in the last section.
 \section{\bf{Preliminaries and results}}\label{non}
Suppose that $X$ is a compact metric space and $\mathcal{F}=\{f_1,\cdots,f_k\}$ is a finite family of homeomorphisms of $X$.
Let us consider the \emph{symbol space} $\Sigma_k^+$
which is the set of one sided infinite words over the alphabet $\{1,\ldots, k\}$.
For any sequence $\omega=(\omega_0\omega_1\cdots \omega_n\cdots)\in\Sigma_k^+$, we write $f^n_\omega(x)=f_{\omega_{n-1}}\circ f_\omega^{n-1}(x)$; $n\in\mathbb{N}$ and $f_\omega^0=Id$.
A fiber orbit corresponding to a one-way infinite word $\omega=(\omega_0\omega_1\cdots)\in\Sigma_k^+$ at a point $x$ is defined by $\mathcal{O}^+(x,\omega)=\{f^n_\omega(x)\}_{n=0}^\infty$.
\begin{definition}
We say that a finite family $\mathcal{R}=\{R_1,\cdots,R_k\}$ of subsets of $X$ is a topological partition if it satisfies the following:
 \\
 (1) each $R_i$ is open in $X$;
 \\
 (2) $R_i\cap R_j=\emptyset$, for each $i,j=1,\cdots,k$, with $i\neq j$;
 \\
 (3) $X=\overline{R_1}\cup\cdots\cup \overline{R_k}$.
 \end{definition}
 In below, we generalize the concept of Markov partition to finitely semi-group actions.

 Take a topological partition $\mathcal{R}=\{R_1,\cdots,R_k\}$ of a compact metric space $X$ with together a finite family $\mathcal{F}=\{f_1,\cdots,f_k\}$ of homeomorphisms of $X$.
 Let us consider the restriction $f_i\vert_{R_i}$, for each $i=1,\cdots,k$.
\\
We say that $(\mathcal{R},\mathcal{F})$ satisfies the $(n,\omega)$-fold intersection property for a positive integer $n\geq 3$ and $\omega=( \omega_0\omega_1\omega_2\cdots)\in\Sigma_k^+$ if $R_{\omega_j}\cap f^{-1}_{\omega_j}(R_{\omega_{j+1}})\neq\emptyset$, $0\leq j\leq n-1$, implies that $\cap_{j=0}^{n-1}(f_\omega^j)^{-1}(R_{\omega_{j+1}})\neq\emptyset$.
\\
A topological partition $\mathcal{R}=\{R_1,\cdots,R_k\}$ is \emph{Markov} for semigroup generated by $\mathcal{F}=\{f_1,\cdots,f_k\}\subset home(X)$, if $(\mathcal{R},\mathcal{F})$ satisfies the $(n,\omega)$-fold intersection property for all $n\geq 3$ and $\omega\in\Sigma_k^+$.

It is not hard to see that $(\mathcal{R},\mathcal{F})$ is Markov provided that it
 satisfies the following condition:
 if $f_i(R_i)\cap R_j\neq\emptyset$, then $f_i(R_i)\supset R_j$, for each $i,j=1,\cdots,k$.

\subsection{\bf{Statement of the main result}}
\label{non}
In the rest of the paper we take M to be a compact Riemannian manifold, $m$ denotes the normalized Lebesgue measure
 and $\{f_1, \ldots, f_p, f_{p+1}, \ldots, f_{p+q} \}$
a finite family of $C^{1+\alpha}$-diffeomorphisms defined on $M$ satisfying the following conditions:

$(A_0)$ There exists a topological partition $\mathcal{R}=\{R_1,\cdots, R_p, R_{p+1}, \ldots, R_{p+q}\}$ such that for each $i=1,\cdots,p+q$, the clouser $\overline{R_i}$ has finite inner diameter, i.e. any two points in $\overline{R_i}$ may be joined by a curve contained in $\overline{R_i}$ whose length is bounded by a constant $L$.  Moreover, $m(\partial R_i)=0$, for each $i=1,\cdots,p+q$.

$(A_1)$ If $f_i(R_i)\cap R_j\neq\emptyset$ then $f_i(R_i)\supset R_j$ which implies that $f_i(\overline{R_i})\supset \overline{R_j}$.

$(A_2)$ There exists $\sigma_1, \sigma_2>1$ such that
\begin{itemize}
\item $\Vert Df_i(x)^{-1}\Vert^{-1}\geq\sigma_1$ for every $x \in R_i$, $1 \leq i \leq p$,
\item  $\Vert Df_{p+j}(x)^{-1}\Vert^{-1}\geq\sigma_2^{-1}$ for every $x\in R_{p+j}$, $1 \leq j \leq q$
\end{itemize}
and $\sigma_2$ is close enough to 1.

$(A_3)$ $|det(Df_{p+j}(x))|> q$ for every $x \in R_{p+j}$, $1 \leq j \leq q$.

In fact, the acting of semi-group $\mathcal{G}^+$ generated by $\{f_1, \ldots, f_{p+q}\}$ is expanding on $R_1 \cup \ldots \cup R_p$ and is never very contacting.
According to condition $(A_1)$, the topological partition $\mathcal{R}$ is Markov for $\mathcal{F}$.
Also, by $(A_0)$, $m(\partial \mathcal{R})=0$ and therefore,  $m(\mathcal{O}_\mathcal{G}^+(\partial \mathcal{R}))=0$, where $\mathcal{O}_\mathcal{G}^+(\partial \mathcal{R})$ is the total orbit of $\partial \mathcal{R}$, i.e.
$$\mathcal{O}_\mathcal{G}^+(\partial \mathcal{R}) = \bigcup_{g \in \mathcal{G}^+} g(\partial \mathcal{R}).$$
Moreover, we take $N=M\backslash\mathcal{O}_\mathcal{G}^+(\partial R)$. Then $m(N)=1$.
\begin{definition}
We say that a semi-group $\mathcal{G}^+$
 satisfies the weak cycle property if for each open set $B\subset M$, there exists a sequence  $\{h_{i}\}\subset \mathcal{G}^{+}$ such that $M\circeq \bigcup_{i=1}^{\infty}h_{i}(B)$, this means that $m (M\setminus \bigcup_{i=1}^{\infty} h_{i}(B)=0)$.
\end{definition}
Now, we state the main result of the paper that illustrates the Lebesgue ergodicity for a large class of finitely generated semigroups of $C^{1+\alpha}$-diffeomorphisms. Here the concept of ergodicity is referred to ergodicity of quasi-invariant measures and our focus is on the Lebesgue measure since it is quasi invariant for $C^1$ maps.
\begin{theoremA}\label{main}
 Suppose that $\mathcal{G}^+$ is a finitely generated semi-group of $Diff^{1+\alpha}(M)$ for which the following holds:

 $(1)$ $\mathcal{G}^+$ satisfies the weak-cycle property;

 $(2)$ there exists a finite family $\{f_1, \ldots, f_{p+q}\}\subset \mathcal{G}^+$ satisfies conditions $(A0)$, $(A1)$, $(A2)$ and $(A3)$ above.

  Then $\mathcal{G}^+$ is ergodic with respect to the Lebesgue measure.
\end{theoremA}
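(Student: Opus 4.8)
The plan is to follow the Oliveira--Viana strategy for non-uniformly expanding maps, adapted to the semi-group setting via the skew-product over $\Sigma_k^+$. First I would build the natural skew-product $F\colon \Sigma_k^+\times M\to\Sigma_k^+\times M$, $F(\omega,x)=(\sigma\omega,f_{\omega_0}(x))$, and equip the base with the Bernoulli measure $\nu$ giving each symbol $1,\dots,p$ weight proportional to $1$ and each symbol $p+1,\dots,p+q$ a small weight; the point of conditions $(A_2)$ and $(A_3)$ (with $\sigma_2$ close to $1$ and $|\det Df_{p+j}|>q$) is exactly that the fiberwise expansion is positive on average, so the skew-product is \emph{orbital non-uniformly expanding} in the sense described in the introduction. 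Using $(A_1)$ the partition $\mathcal{R}$ is Markov for $\mathcal F$ (as already noted in the excerpt), and together with the bounded-inner-diameter and $m(\partial R_i)=0$ hypotheses of $(A_0)$ this lets me run a Markov-tower / hyperbolic-times argument in the fibers: for $m$-a.e.\ $x$ and $\nu$-a.e.\ $\omega$ there are infinitely many hyperbolic times $n$ at which $f^n_\omega$ maps a neighborhood of $x$ diffeomorphically onto a whole element $\overline{R_i}$ with uniformly bounded distortion (the $C^{1+\alpha}$ hypothesis plus finite inner diameter give the Koebe-type distortion bound). This is the step I expect to be the main obstacle, since Markov partitions for semi-group actions are not standard; the key is that the $(n,\omega)$-fold intersection property promotes local fiberwise expansion to genuine onto-images of partition elements.

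Next I would prove \emph{local ergodicity}: any forward $\mathcal{G}^+$-invariant measurable set $A$ with $m(A)>0$ must, after intersecting with the full-measure set $N=M\setminus\mathcal O^+_{\mathcal G}(\partial\mathcal R)$, contain $m$-almost all of some partition element $R_i$. The argument is the usual density-point plus bounded-distortion bootstrap: pick a Lebesgue density point $x$ of $A$ in $N$; along a hyperbolic time $n$ for a generic $\omega$, the map $f^n_\omega$ sends a small ball around $x$ onto $\overline{R_i}$ with distortion bounded by a universal constant, so the density of $f^n_\omega(A)$ in $R_i$ is at least $1-\varepsilon$ for any prescribed $\varepsilon$; since $A$ is forward invariant, $f^n_\omega(A\cap B)\subset A$, hence $m(R_i\setminus A)=0$ for that $i$. (A covering/ Vitali argument using $m(\partial R_i)=0$ makes the "onto a full element" claim precise.)

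Finally I would use the \emph{weak cycle property} $(1)$ to pass from local ergodicity to full ergodicity. Having $m(R_i\setminus A)=0$, set $B=R_i$ (an open set); the weak cycle property supplies $\{h_j\}\subset\mathcal G^+$ with $m\bigl(M\setminus\bigcup_j h_j(R_i)\bigr)=0$. Since $A$ is forward invariant and quasi-invariance of $m$ (valid for $C^1$ maps, as recalled in the introduction) guarantees $h_j$ preserves null sets, each $h_j(R_i\setminus A)$ is null, so $h_j(R_i)\subset A$ up to a null set; taking the union over $j$ gives $m(M\setminus A)=0$. Thus every forward-invariant set has measure $0$ or $1$, i.e.\ $\mathcal G^+$ is Lebesgue ergodic. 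The only genuinely delicate points are (a) the existence and abundance of hyperbolic times for $\nu\times m$-a.e.\ $(\omega,x)$, which is where $(A_2)$--$(A_3)$ and a Borel--Cantelli / large-deviations estimate enter, and (b) the bounded distortion of the inverse branches along hyperbolic times, which rests on the $C^{1+\alpha}$ regularity and the finite-inner-diameter part of $(A_0)$.
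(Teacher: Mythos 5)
Your overall architecture (non-uniform expansion $\Rightarrow$ hyperbolic times $\Rightarrow$ bounded distortion $\Rightarrow$ a forward-invariant set of positive measure fills up some partition element $\Rightarrow$ weak cycle property globalizes) is exactly the paper's, and your last two steps — the distortion bootstrap onto a full element $R_i$ and the passage from $m(R_i\setminus A)=0$ to $m(M\setminus A)=0$ via $M\circeq\bigcup_j h_j(R_i)$ and quasi-invariance — coincide with Proposition 5.1 and the end of Section 5 essentially verbatim (the paper phrases the local step via approximation by hyperbolic cylinders $\mathcal{A}_n$ rather than Lebesgue density points, but these are the same argument).

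The genuine gap is in your first step, the mechanism producing the expanding words. You propose a skew product over $\Sigma_k^+$ with a Bernoulli measure $\nu$ weighted so that expansion holds ``on average,'' and you want hyperbolic times for $\nu\times m$-a.e.\ $(\omega,x)$. This cannot work as stated: condition $(A_2)$ controls $\|Df_i(x)^{-1}\|$ only for $x\in R_i$, so a symbol drawn independently of the position of the point gives no derivative bound at all at that step, and no choice of Bernoulli weights repairs this. Indeed the paper explicitly recalls [BFS, Prop.~4.4] that non-uniform expansion on random orbits in the a.e.-$\omega$ sense fails for finitely generated semigroups of diffeomorphisms; the whole point of the \emph{orbital} notion is that one only needs a single good word per point. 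The paper's construction is deterministic: to each $x\in N=M\setminus\mathcal{O}^+_{\mathcal{G}}(\partial\mathcal{R})$ it assigns the \emph{itinerary} $\omega(x)$ dictated by the Markov partition (apply $f_i$ exactly when the current point lies in $R_i$), so that $(A_2)$ applies at every step, and the fraction of time spent in the contracting elements $R_{p+1},\dots,R_{p+q}$ is controlled by the Jacobian condition $(A_3)$ ($|\det Df_{p+j}|>q$) through the volume-counting argument of [ABV, Lemma~A.1] — not through a Borel--Cantelli or large-deviations estimate over a product measure. With $\sigma_1^{-\epsilon_0}\sigma_2\le e^{-c}$ this yields the Pliss-type estimate and hence infinitely many hyperbolic times along $\omega(x)$ for $m$-a.e.\ $x$, which is what the rest of your (otherwise correct) argument needs. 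Replace your probabilistic base by the itinerary construction and the proof goes through.
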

\section{\bf{Non-uniform expanding semigroups}}\label{non}
Non-uniformly expanding maps introduced in \cite{A, Vi}. Then general conclusions for systems exhibiting non-uniform expanding behavior provided in \cite{ABV}.
Let us recall that a local diffeomorphism $f:M \to M$ is \emph{non-uniformly expanding} if there exists $c>0$ such that for Lebesgue almost every point $x\in M$ one has:
$$\limsup_{n \to \infty} \frac{1}{n}\sum_{j=0}^{n-1} \log \parallel Df(f^{j}(x))^{-1}\parallel <-c.$$
This approach has been most effective in studying ergodic aspects of systems and extended to the random setting \cite{V}: the authors considered random perturbations at each iterate a map, that is close to a non-uniformly expanding map, chosen independently according to some probabilistic law $\theta_{\varepsilon}$, where $\varepsilon >0$ is the noise level.

The notion of non-uniformly expanding on random orbits was addressed in \cite{BFS} with slightly different means.

To be more precise, consider a measure preserving system $(\Omega ,\sigma , \mathbb{P})$, where $\mathbb{P}$ is a Borel measure, $\sigma:\omega \to \omega$ is $\mathbb{P}$-invariant ergodic transformation and $\Omega$ is a compact separable metric space with a random continuous map $F:\Omega \rightarrow C^{r}(M,M)$.
We denote $f_{\omega}:= F(\omega)$, $f_{\omega}^n:= f_{\sigma^{n-1}\omega}\circ \dots \circ f_{\omega}$ and $f_{\omega}^{0}:= id$.\\
Now, given $x\in M$ and $\omega \in \Omega^{\mathbb{Z}}$, the sequence $(f^{n}_{\omega}(x))_{n\geq 1}$ is called a \emph{random orbit} of $x$.

We say that $F$ is a \emph{non-uniformly expanding on random orbits} if there exists $c>0$ such that for $(\mathbb{P}\times m)$-almost $(\omega,x)\in \Omega \times m$ it holds:
$$\limsup_{n \to \infty} \frac{1}{n}\sum_{i=0}^{\infty} \parallel Df_{\sigma^i\omega}(f^{j}_{\omega}(x)^{-1})\parallel<-c.$$
They proved that [Proposition 4.4, \cite{BFS}] there are no non-uniform expanding finitely generated semi-group of diffeomorphisms which acts on random orbits.


In this paper, we deal with random product of finitely many maps and semi-group action generated by these maps and then introduce a weak form of non-uniform expanding property that we addressed here.
\begin{definition}
Consider a finitely generated semigroup $\mathcal{G}^+$ (or group $\mathcal{G}$) with generators $\{f_0, \ldots, f_{k-1}\}$.
 We say that $\mathcal{G}^+$ (or $\mathcal{G}$) is orbital non-uniformly expanding if there exists $c>0$ such that for Lebesgue almost every $x\in M$, there is a sequence $\omega \in \Sigma_+^k$ (or $\omega \in \Sigma^k$) satisfying
  \begin{equation}\label{1}
\limsup_{n \to \infty} \dfrac{1}{n}\sum_{i=0}^{n-1}\log \parallel Df_{\omega_i}(f_\omega^i(x))^{-1}\Vert \leq -c.
 \end{equation}
\end{definition}
In the rest of this section, we show that a large class of finitely generated semi-groups of $C^{1+\alpha}$ diffeomorphisms on a compact manifold $M$ exhibit orbital non-uniformly expanding property.
 \begin{definition}
  Let $x\in N= M - \partial \mathcal{R}$ be given.
  Suppose that $x\in R_{\omega_0}$ and
 the indices $\omega_0, \omega_1, \cdots, \omega_{j-1} \in\{1, \cdots, p+q\}$ are chosen such that for each $k=0,...,j-2$,
  \begin{center}
  $f_{\omega_k} \circ \cdots \circ f_{\omega_0}(x)\in R_{\omega_{k+1}}.$
  \end{center}
 Now, we choose an index $\omega_j\in\{1, \cdots, p+q\}$ satisfying
 \begin{center}
 $f_{\omega_{j-1}}\circ \cdots \circ f_{\omega_0}\in R_{\omega_j}.$
 \end{center}
 In this way, inductively we may associate a sequence $\omega=\omega(x)$, to each $x\in N$, by taking $\omega=(\omega_0,\omega_1,\cdots)$ which is called the \emph{itinerary} of $x$. Let $\pi_n$ denote the projection that maps the sequence $\omega=(\omega_0,\omega_1,\cdots)$ to the finite word $\omega(x, n)=(\omega_0,\omega_1,\cdots, \omega_{n-1})$ which we refer to it as $n$-\emph{itinerary} of $x$.
 \end{definition}
 \begin{proposition}\label{uniform}
 Consider semi group $\mathcal{G}^+ \subset Diff^{1+\alpha}(M)$ for which there exist a finite family $\{f_1, \ldots, f_p, f_{p+1}, \ldots, f_{p+q} \}\subset\mathcal{G}^+$ and a topological partition
$\mathcal{R}=\{R_1,\cdots, R_p, R_{p+1}, \ldots, R_{p+q}\}$ satisfying conditions $(A0)$, $(A1)$, $(A2)$ and $(A3)$. Then $\mathcal{G}^+$ is orbital non-uniformly expanding.
 \end{proposition}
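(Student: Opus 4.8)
The plan is to run the orbit of (Lebesgue) almost every $x$ along its canonical itinerary $\omega=\omega(x)$, constructed just above, and to show that the asymptotic frequency with which $\omega(x)$ uses the ``bad'' symbols $p+1,\dots,p+q$ is strictly below an explicit threshold $\beta^\ast<1$; condition $(A_2)$ together with $\sigma_2$ being close to $1$ will then force the $\limsup$ in \eqref{1} to be negative. First I would make sure the itinerary is available: for $x$ in the full–measure set $N=M\setminus\mathcal O_{\mathcal G}^{+}(\partial\mathcal R)$ every point $f_\omega^i(x)$ lies in a unique open piece $R_{\omega_i}$, so $\omega(x)$ is well defined; by $(A_1)$ each $f_j|_{R_j}$ is a diffeomorphism onto a union of closures of partition elements, hence for any legal word the map $f_{a_{n-2}}\circ\cdots\circ f_{a_0}$ carries the cylinder $[a_0\cdots a_{n-1}]=\{y:\omega_i(y)=a_i,\ i<n\}$ diffeomorphically onto $R_{a_{n-1}}$, and $m(\partial\mathcal R)=0$ (from $(A_0)$) guarantees that these cylinders partition $M$ mod $0$. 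The finite inner diameter of each $\overline{R_i}$ in $(A_0)$, together with $C^{1+\alpha}$–regularity, provides the bounded distortion estimates needed later.

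Next I would record the reduction coming from $(A_2)$. Put $G_n(x)=\#\{0\le i<n:\omega_i(x)\le p\}$ and $B_n(x)=n-G_n(x)$. If $\omega_i\le p$ then $f_\omega^i(x)\in R_{\omega_i}$ and the first bullet of $(A_2)$ gives $\log\|Df_{\omega_i}(f_\omega^i(x))^{-1}\|\le-\log\sigma_1$, while if $\omega_i>p$ the second bullet gives $\log\|Df_{\omega_i}(f_\omega^i(x))^{-1}\|\le\log\sigma_2$. Hence
\[
\frac1n\sum_{i=0}^{n-1}\log\|Df_{\omega_i}(f_\omega^i(x))^{-1}\|\ \le\ -\log\sigma_1+\frac{B_n(x)}{n}\bigl(\log\sigma_1+\log\sigma_2\bigr).
\]
So it suffices to produce $\beta<\beta^\ast:=\dfrac{\log\sigma_1}{\log\sigma_1+\log\sigma_2}$ with $\limsup_n B_n(x)/n\le\beta$ for a.e.\ $x$; then \eqref{1} holds with $c=\log\sigma_1-\beta(\log\sigma_1+\log\sigma_2)>0$. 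Observe that $\beta^\ast\to 1$ as $\sigma_2\to 1$, so one only needs a density bound that is \emph{strictly below $1$}, which is exactly the role played by ``$\sigma_2$ close enough to $1$''.

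For the density estimate I would pass to the piecewise map $F$ defined by $F|_{R_i}=f_i$, so that $F^i(x)=f_\omega^i(x)$. By $(A_2)$ the good branches have all singular values $\ge\sigma_1$, hence $|\det Df_i|\ge\sigma_1^{m}$, and by $(A_3)$ the bad branches have $|\det Df_{p+j}|>q$; thus $|\det DF|$ is bounded below by some $\lambda>1$, i.e.\ $F$ is a uniformly volume–expanding Markov map with $C^{1+\alpha}$ branches and bounded distortion. Using its transfer (Perron–Frobenius) operator over the Markov partition one obtains finitely many ergodic absolutely continuous $F$–invariant probabilities $\nu_1,\dots,\nu_s$, with densities bounded above and below, whose basins cover $M$ mod $0$. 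On the basin of $\nu_r$, Birkhoff's theorem applied to $\mathbf 1_V$ (with $V=R_{p+1}\cup\cdots\cup R_{p+q}$, $U=R_1\cup\cdots\cup R_p$) gives $B_n(x)/n\to\nu_r(V)$. Finally $\nu_r(V)=1-\nu_r(U)\le 1-c_-\,m(U)$, where $c_->0$ is a lower bound for the density of $\nu_r$ and $m(U)>0$ since $R_1$ is a nonempty open set; as $1-c_-m(U)$ does not tend to $1$ as $\sigma_2\to 1$ (the admissible generators staying in a compact part of $\mathrm{Diff}^{1+\alpha}(M)$) while $\beta^\ast\to1$, the assumption that $\sigma_2$ is sufficiently close to $1$ yields $1-c_-m(U)<\beta^\ast$. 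Taking $\beta=1-c_-m(U)$ and combining with the previous paragraph completes the argument, proving $\mathcal G^+$ is orbital non-uniformly expanding.

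The hard part will be the third step: showing that almost every itinerary spends a definite asymptotic fraction of time in the expanding region $U$. All of the hypotheses are used here — the Markov property $(A_1)$ so that $F$ genuinely has a Markov structure, the volume expansion $(A_3)$ so that $F$ is volume–expanding, and the distortion control from $(A_0)$ and $C^{1+\alpha}$–regularity so that the transfer operator is well behaved — and the usual Lasota–Yorke/Ruelle machinery must be adapted to a map that expands volumes but is \emph{not} metrically expanding on $V$ (one option is to induce on $U$, or to work directly with cylinders of itinerary words). A purely combinatorial alternative is a Borel–Cantelli estimate on the Lebesgue measure of the union of cylinders whose first $n$ itinerary symbols contain at least $\beta n$ bad symbols: each bad transition pulls back measure by a factor $<1/q$, which is balanced against the at most $q$ bad branches, so the summability of the series hinges precisely on the excess of $|\det Df_{p+j}|$ over $q$ in $(A_3)$ and on the gap $\beta<\beta^\ast<1$ coming from $\sigma_2\approx1$. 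Either way, $(A_3)$ and the closeness of $\sigma_2$ to $1$ are the essential ingredients.
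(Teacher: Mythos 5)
Your reduction is exactly the one the paper uses: splitting the Birkhoff sum of $\log\Vert Df_{\omega_i}(f_\omega^i(x))^{-1}\Vert$ into a contribution $\le -\log\sigma_1$ over good symbols and $\le\log\sigma_2$ over bad ones, so that everything hinges on showing that almost every itinerary spends a definite fraction $\epsilon_0=1-\beta>0$ of its time in $R_1\cup\dots\cup R_p$, with ``$\sigma_2$ close to $1$'' absorbing the slack (the paper's condition $\sigma_1^{-\epsilon_0}\sigma_2\le e^{-c}$ is your $\beta<\beta^\ast$). The paper disposes of the hard step in one line, by asserting that $(A_3)$ allows Lemma A.1 of Alves--Bonatti--Viana to be carried over; that lemma is precisely your ``purely combinatorial alternative'': bound the Lebesgue measure of the union of length-$n$ cylinders whose itinerary contains at least $\beta n$ bad symbols by pulling back along the Markov inverse branches, use $|\det Df_{p+j}|>q$ to beat the $q$-fold multiplicity of bad branches together with the subexponential size of $\binom{n}{k}$ and of $p^{n-k}$ when $k\ge\beta n$ with $\beta$ close to $1$, and conclude by Borel--Cantelli. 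So in its second variant your argument coincides with the paper's; indeed you supply more detail than the paper does, which offers only the citation.

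Your primary route --- the transfer operator for the piecewise map $F$, finitely many ergodic absolutely continuous invariant probabilities with densities bounded above and below, and Birkhoff's theorem on their basins --- is genuinely different and, as written, has a gap you should not gloss over. For a Markov map that is only volume-expanding and is metrically contracting on $V=R_{p+1}\cup\dots\cup R_{p+q}$, the existence of absolutely continuous invariant measures, and especially the lower bound $c_-$ for their densities on all of $U$, are normally derived from non-uniform expansion via hyperbolic times --- that is, from the very conclusion you are trying to prove; nothing in $(A_0)$--$(A_3)$ hands you a Lasota--Yorke inequality directly. Moreover $\nu_r(U)\ge c_-\,m(U)$ requires the support of $\nu_r$ to contain $U$, which is not automatic. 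I would therefore promote the Borel--Cantelli counting to the main argument (it uses only $(A_1)$, $(A_3)$ and the bounded distortion from $(A_0)$ and $C^{1+\alpha}$ regularity) and drop the spectral one; with that change your proof matches the paper's, modulo the fact that the paper leaves the counting to the reference.
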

\begin{proof}
Consider the subset
$\{f_1, \ldots, f_p, \ldots, f_{p+q}\}$ of $\mathcal{G}^+$ and a family $\mathcal{R}$ of open subsets of $M$ satisfying the assumptions.
Also, for each $x\in N$, take a sequence $\omega=\omega(x)$
which is the itinerary of $x$.
Since $N$ has full Lebesgue measure and condition $(A3)$ ensures that the approach applied by Alves, Bonatti and Viana [Lemma A1, \cite{ABV}] can be extended to our setting, hence the following claim holds.
\begin{claim} For Lebesgue almost every point $x\in M$, the $\omega$-orbital branch of $x$ spends a fraction $\epsilon_0>0$ of the time in $R_1\cup\cdots\cup R_p$, for some  $\epsilon_0>0$; that is
\begin{center}
$\# \{0\leq k< n: f_{\omega}^k(x)\in R_1\cup\cdots\cup R_p\}\geq\epsilon_0n$
\end{center}
for large enough $n$, where $\omega$ is the itinerary of $x$.
\end{claim}
Now, we use the claim to show that $\mathcal{G}^+$ is orbital non-uniformly expanding, that is the equality (\ref{1}) holds for Lebesgue almost every point $x\in M$.
Take $\epsilon_0>0$ as introduced in the claim and $\sigma_2$ close enough to 1 so that $\sigma_1^{- \epsilon_0}\sigma_2 \leq e^{-c}$, for some $c>0$. Let $x$ be any point satisfies the conclusion of the claim and $\omega=\omega(x)$ is the itinerary of $x$. Then
\begin{align*}
\prod_{j=0}^{n-1}\Vert Df_{\omega_j}(f_\omega^j(x))^{-1}\Vert\leq\sigma_1^{- \epsilon_0 n} \sigma_2^{(1- \epsilon_0)n}\leq e^{-c n}
\end{align*}
for every large enough $n$. This means that $x$ satisfies the conclusion of the proposition.
\end{proof}
 \section{Hyperbolic times and hyperbolic cyliders}\label{result}
 Throughout this section, we take $\{f_1, \ldots, f_p, f_{p+1}, \ldots, f_{p+q} \}\subset \mathcal{G}^+$
a finite family of $C^{1+\alpha}$-diffeomorphisms and a topological partition $\mathcal{R}=\{R_1,\cdots, R_p, R_{p+1}, \ldots, R_{p+q}\}$ satisfying the conditions $(A0)$, $(A1)$, $(A2)$, and $(A3)$.

 First, we present the concept of \emph{hyperbolic time} of a point $x\in M$ for semigroup action $\mathcal{G}^+$.
 This concept was introduced in \cite{ABV} for differentiable deterministic maps.
\begin{definition}
 Given $0<c<1$, we say that $n\in\mathbb{N}$ is a hyperbolic time for $x\in M$ if for every $1\leq k\leq n$, one has
\begin{equation}\label{2}
\prod_{j=n-k}^{n-1}\Vert Df_{\omega_j}(f_\omega^j(x))^{-1}\Vert\leq e^{-c k},
\end{equation}
\end{definition}
where $\omega=\omega(x)=(\omega_0, \omega_1, \ldots, \omega_j, \ldots)$ is the itinerary of $x$.\\
Any nonempty set of the form
$$C^n=C^n[\omega_0, \ldots, \omega_{n-1}]= \{x \in M : x \in R_{\omega_0}, f_{\omega_0}(x) \in R_{\omega_1}, \ldots, f_\omega^n(x) \in R_{\omega_{n-1}}\}$$
 is called a \emph{cylinder} of length $n$. We say that $C^n$ is a \emph{hyperbolic
cylinder} provided that $n$ is a hyperbolic time for each point $x \in C^n$ and its itinerary $\omega$.

Let $\mathcal{C}^n$ be the family of all cylinders of length $n$ and $\mathcal{C}^n_h$ the subset of hyperbolic cylinders.

\begin{remark}
The closure of $\mathcal{C}^{n}[\omega_{0},\dots ,\omega_{n-1}]$ is defined as
$$\bar{C}^{n}[\omega_{0},\dots ,\omega_{n-1}]=
\{y\in M : y\in \bar{R}_{\omega_{0}}, f_{\omega
 _{0}}(y)\in \bar{R}_{\omega_{1}}, \dots ,  f_{\omega}^n(y)\in \bar{R}_{\omega_{n-1}} \}.$$
  Hence, $f_{\omega_{j-1}}\circ \dots \circ  f_{\omega_{0}}(\bar{C}^{n}[\omega_{0},\dots , \omega_{n-1}])=\bar{C}^{n-j}[\omega_{j},\dots , \omega_{n-1}]$, for any $1\leq j<n$.\\
So, $f_{\omega_{n-1}}\circ \dots \circ f_{\omega_{0}}(\bar{C}^{n}[\omega_{0},\dots ,\omega_{n-1}])=f_{\omega_{n-1}}(\bar{R}_{\omega_{n-1}})$ and thus its inner diameter is bounded by the constant $$K_{2}=K_{1} max_{1\leq i\leq p+q} \{\parallel Df_{i}(x)\parallel : x\in R_{i}\},$$ where $K_{1}$ is the maximum inner diameter of $\bar{R}_{i}$ over all $i=1,\dots ,p+q$.\\
\end{remark}
\begin{definition}
Let $x\in M$ with itinerary $\omega$. We say that the frequency of hyperbolic times for $x$ is greater than $\epsilon_0>0$ if for large $n\in\mathbb{N}$, there are $l\geq\epsilon_0 n$ and integers $1\leq n_1<\cdots<n_l\leq n$ which are hyperbolic times for $x$.
\end{definition}
\begin{remark}
Let us note that if $\omega$ is the itinerary of $x$ then $\sigma^k\omega$ is the itinerary of $f_\omega^k(x)$, where $\sigma$ is the Bernoulli shift transformation.\\
If $n$ is a hyperbolic time for $x$ with itinerary $\omega$, then $n-s$ is a hyperbolic time for $f^s_\omega(x)$, for any $1\leq s<n$.
\\ It is not hard to see that the converse is also true. Indeed, if $k<n$ is a hyperbolic time for $x$ and there exists $1\leq s\leq k$ such that $n-s$ is a hyperbolic time for $f^s_\omega(x)$ then $n$ is a hyperbolic time for $x$.
\end{remark}
The next lemma asserts that for points satisfy the orbital non-uniformly expanding property (\ref{1}), there are infinitely many hyperbolic times. Moreover, the set of hyperbolic times has positive density at infinity, and its proof is based on a lemma due to Pliss (see e.g. \cite{M}).
\begin{lemma}\label{frequency}
Suppose that $x\in M$, $\omega$ is its itinerary and $n\geq 1$ is such that
\begin{equation}\label{3}
\frac{1}{n}\sum_{j=0}^{n-1}\log\Vert Df_{\omega_j}(f_{\omega}^j(x))^{-1}\Vert\leq -c<0.
\end{equation}
Then, there is $\epsilon_0>0$, depending only on $\mathcal{G}^+$ and $c$, and a sequence of hyperbolic times $1\leq n_1<\cdots<n_t\leq n$ for $x$, with $t\geq\epsilon_0 n$; that is the frequency of hyperbolic times for $x$ is larger than $\epsilon_0$.
\end{lemma}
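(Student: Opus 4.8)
The plan is to derive the statement from the classical lemma of Pliss (see, e.g., \cite{M}), applied to the sequence of logarithmic coderivatives along the $\omega$-orbit of $x$. Put $a_j := -\log\Vert Df_{\omega_j}(f_\omega^j(x))^{-1}\Vert$ for $0\leq j\leq n-1$. Two elementary observations make the lemma applicable. First, compactness of $M$ together with $C^1$ regularity of the generators yields the finite bound $H := \log\max_{1\leq i\leq p+q}\sup_{x\in M}\Vert Df_i(x)\Vert$; since $\Vert Df_i(x)^{-1}\Vert\cdot\Vert Df_i(x)\Vert\geq 1$, we get $a_j\leq\log\Vert Df_{\omega_j}(f_\omega^j(x))\Vert\leq H$ for every $j$, and $H$ depends only on the generators of $\mathcal{G}^+$. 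Second, hypothesis (\ref{3}) is exactly $\sum_{j=0}^{n-1}a_j\geq cn$; in particular $H\geq c$.

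Fix an auxiliary exponent $c_1$ with $0<c_1<c$, for definiteness $c_1=c/2$, and apply Pliss's lemma to $a_0,\dots,a_{n-1}$ with uniform upper bound $H$, average lower bound $c$, and target $c_1$. This produces a constant $\epsilon_0 := (c-c_1)/(H-c_1)\in(0,1]$, depending only on $c$ and $\mathcal{G}^+$, and integers $1\leq n_1<\cdots<n_t\leq n$ with $t\geq\epsilon_0 n$ such that
\[
\sum_{j=r}^{n_i-1}a_j\ \geq\ c_1\,(n_i-r)\qquad\text{for all }1\leq i\leq t\text{ and }0\leq r<n_i .
\]
Specializing to $r=n_i-k$ with $1\leq k\leq n_i$ gives $\sum_{j=n_i-k}^{n_i-1}a_j\geq c_1 k$, that is, $\sum_{j=n_i-k}^{n_i-1}\log\Vert Df_{\omega_j}(f_\omega^j(x))^{-1}\Vert\leq -c_1 k$, which is precisely the inequality $\prod_{j=n_i-k}^{n_i-1}\Vert Df_{\omega_j}(f_\omega^j(x))^{-1}\Vert\leq e^{-c_1 k}$ defining a hyperbolic time (with exponent $c_1$; up to relabelling the constant in the definition, this is the asserted conclusion). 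Hence $n_1<\cdots<n_t$ are hyperbolic times for $x$ and, since $t\geq\epsilon_0 n$, the frequency of hyperbolic times of $x$ is at least $\epsilon_0$.

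The argument is essentially bookkeeping once Pliss's lemma is invoked. The two points requiring care are: the existence of the uniform constant $H$, which is where compactness of $M$ enters and which makes $\epsilon_0$ depend only on $\mathcal{G}^+$ and $c$; and the reversal of the summation range needed to pass from the ``prefix sums'' delivered by Pliss's lemma to the ``suffix products'' that appear in the definition of a hyperbolic time. I do not anticipate a genuine obstacle beyond this, nor the need for any property of $\mathcal{G}^+$ other than the finitely many generators being $C^1$ on a compact manifold.
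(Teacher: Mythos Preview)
Your proposal is correct and follows essentially the same approach as the paper: the paper's proof consists entirely of the remark that the statement is proved as in Corollary~3.2 of \cite{ABV} (see also Proposition~4.4 of \cite{V}), i.e., by a direct application of Pliss's lemma to the sequence $a_j=-\log\Vert Df_{\omega_j}(f_\omega^j(x))^{-1}\Vert$. You have simply written out the standard details---the uniform bound $H$ from compactness and the passage from Pliss's prefix estimates to the suffix products defining hyperbolic times---that the paper leaves implicit in its citation.
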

\begin{proof}
Similar to Corollary 3.2 of \cite{ABV}. See also Proposition 4.4 of \cite{V}.
\end{proof}
Suppose that $x$ satisfies the conclusion of Proposition \ref{uniform}, so for large enough $n>0$
\begin{align*}
\sum_{j=0}^{n-1}\log\Vert Df_{\omega_j}(f_\omega^j(x))^{-1}\Vert<-nc,
\end{align*}
thus
\begin{align*}
\prod_{j=0}^{n-1}\Vert Df_{\omega_j}(f_\omega^j(x))^{-1}\Vert<e^{-nc}.
\end{align*}
 Then $n$ is a hyperbolic time for $x$.
\\ Let $\mathcal{H}$ be the set of all points $x\in M$ with itinerary $\omega$ satisfying the orbital non-uniformly expanding property (\ref{1}). Clearly $\mathcal{H}$ has full Lebesgue measure.

Here, the ball of radius $r>0$ is meant with respect to the Riemannian distance $dist(x, y)$ on $M$. If $D \subset M$
is any path connected domain, we define the distance $dist_D(x, y)$ between two points $x$ and $y$ in $D$
to be the infimum the lengths of all curves joining $x$ to $y$ inside $D$. In particular, $dist_D(x, y)\geq dist(x, y)$ for every $x$ and $y$ in $D$.
\begin{lemma}
There is $r>0$ such that if $n$ is a hyperbolic time for $x\in\mathcal{H}$ with itinerary $\omega=(\omega_0, \omega_1, \ldots)$, then
\begin{equation}\label{5}
\|Df_{\omega_0}(y)^{-1}\| \leq e^{\frac{c}{2}} \|Df_{\omega_0}(x)^{-1}\|,
\end{equation}
for any point $y$ in the ball $B(x, r e^{\frac{-nc}{2}})$.
\end{lemma}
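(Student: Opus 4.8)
The plan is to exploit the $C^{1+\alpha}$ regularity of the generators together with the bounded-distortion bookkeeping packaged in a hyperbolic time. Fix the finite family $\{f_1,\dots,f_{p+q}\}$ and let $C_0>0$ be a H\"older constant that works simultaneously for all the maps $x\mapsto \log\|Df_i(x)^{-1}\|$, $1\le i\le p+q$; that is, $\bigl|\log\|Df_i(y)^{-1}\| - \log\|Df_i(x)^{-1}\|\bigr|\le C_0\,\mathrm{dist}(x,y)^\alpha$ whenever $x,y$ lie in a common member of $\mathcal R$. I would first record the elementary consequence of hyperbolicity that I actually need: if $n$ is a hyperbolic time for $x$ with itinerary $\omega$, then for every $0\le j\le n-1$,
\begin{equation*}
\prod_{i=j}^{n-1}\|Df_{\omega_i}(f_\omega^i(x))^{-1}\| \le e^{-c(n-j)},
\end{equation*}
which follows by applying the defining inequality (\ref{2}) with $k=n$ and with $k=j$ and dividing, using that each factor $\|Df_{\omega_i}(\cdot)^{-1}\|$ is bounded above by a uniform constant $\sigma_0$ (so the intermediate products are controlled). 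In particular each single factor satisfies $\|Df_{\omega_j}(f_\omega^j(x))^{-1}\|\le \sigma_0$, and the tail product from $j$ to $n-1$ contracts at rate $e^{-c}$ per step.

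Next I would choose $r>0$. Let $\lambda_1 = \max_i \max_{x\in \overline{R_i}}\|Df_i(x)\|\ge 1$ be the uniform upper bound on the derivatives; then any two points tracked together along $j$ steps of the itinerary spread apart by at most a factor $\lambda_1^{j}$ — but that is far too crude. The right estimate is the reverse one: if $y\in B(x,\rho)$ and both have the \emph{same} $n$-itinerary, then the distance between $f_\omega^j(x)$ and $f_\omega^j(y)$, measured inside the cylinder $\overline{C}^{\,n-j}[\omega_j,\dots,\omega_{n-1}]$, is at most the length of the image of a short curve from $x$ to $y$, and by the chain rule this length is bounded by $\rho\cdot \prod_{i=0}^{j-1}\|Df_{\omega_i}(\cdot)\|$. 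To turn this around into a contraction, I pass to the \emph{last} coordinate: the key point is that $f_\omega^j(x)$ and $f_\omega^j(y)$ are the images under $(f_\omega^n)$ restricted suitably, and the inverse branch contracts. Concretely, one shows by downward induction on $j$ (from $j=n$ to $j=0$) that
\begin{equation*}
\mathrm{dist}(f_\omega^j(x), f_\omega^j(y)) \le K\, e^{-\tfrac{c}{2}(n-j)}\cdot e^{-\tfrac{nc}{2}}
\end{equation*}
for a constant $K$ depending only on the inner-diameter bound $L$ from $(A_0)$ and on $\sigma_0$; this uses the tail-product contraction above together with the fact that the image cylinders have inner diameter bounded by the constant $K_2$ of the Remark, so the curve realizing the distance can be taken inside one member of $\mathcal R$ at each stage. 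Taking $r$ smaller than $K^{-1}$ (and small enough that $re^{-nc/2}\le$ the injectivity radius) then gives $\mathrm{dist}(x,y)\le r e^{-nc/2}$ for $y\in B(x,re^{-nc/2})$, hence in particular $\mathrm{dist}(x,y)<$ the diameter needed for $x$ and $y$ to share $R_{\omega_0}$, and $\mathrm{dist}(x,y)^\alpha \le r^\alpha e^{-nc\alpha/2}$.

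Finally I combine the two ingredients: $B(x,re^{-nc/2})$ is contained in $R_{\omega_0}$ (shrinking $r$ once more if necessary, using $m(\partial\mathcal R)=0$ is irrelevant here — what matters is that a hyperbolic-time ball of that radius stays in the partition element, which the distance estimate above guarantees once $r$ is small), so the H\"older bound applies and
\begin{equation*}
\log\|Df_{\omega_0}(y)^{-1}\| - \log\|Df_{\omega_0}(x)^{-1}\| \le C_0\,\mathrm{dist}(x,y)^\alpha \le C_0\, r^\alpha \le \tfrac{c}{2},
\end{equation*}
the last step being the choice of $r$ with $C_0 r^\alpha \le c/2$. Exponentiating gives (\ref{5}). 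The main obstacle I anticipate is the downward induction in the second paragraph: one has to be careful that the curve joining the forward images stays inside a single element $R_{\omega_j}$ at each step so that the derivative bounds and the inner-diameter constant $L$ (rather than some ambient-manifold quantity) can be used, and that the base case $j=n$ is handled by the inner-diameter bound on $f_{\omega_{n-1}}(\overline{R}_{\omega_{n-1}})$ from the Remark; everything else is bookkeeping with the uniform constants $\sigma_0$, $\lambda_1$, $L$, $K_2$.
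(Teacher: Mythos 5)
Your final paragraph is the entire proof, and it coincides with the paper's: the paper chooses $r$ by continuity and compactness of $M$ so that $\|Df_{\omega_0}(y)^{-1}\|\le e^{c/2}\|Df_{\omega_0}(x)^{-1}\|$ whenever $y\in B(x,re^{-c/2})$, and then simply observes that $B(x,re^{-nc/2})\subset B(x,re^{-c/2})$ because $n\ge 1$. Your quantitative version via an $\alpha$-H\"older modulus $C_0$ with $C_0r^\alpha\le c/2$ is the same argument. Everything in between --- the tail-product estimate and the downward induction on $\mathrm{dist}(f_\omega^j(x),f_\omega^j(y))$ --- is machinery for Proposition \ref{preball} (contraction of the inverse branch along a hyperbolic time), not for this lemma, which concerns only the single map $f_{\omega_0}$ at the two nearby points $x$ and $y$; none of it is actually invoked in your concluding display. (Incidentally, the tail bound $\prod_{i=j}^{n-1}\|Df_{\omega_i}(f_\omega^i(x))^{-1}\|\le e^{-c(n-j)}$ is just the definition of hyperbolic time with $k=n-j$; ``dividing'' the $k=n$ inequality by the $k=j$ one does not produce an upper bound on a quotient of upper-bounded quantities.)

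The one genuine flaw is the claim that, after shrinking $r$, the ball $B(x,re^{-nc/2})$ is contained in $R_{\omega_0}$, ``which the distance estimate above guarantees once $r$ is small.'' This is false: a point with hyperbolic time $n$ may lie arbitrarily close to $\partial R_{\omega_0}$, and no smallness of $r$ nor largeness of $n$ forces the ball into the partition element. You need this containment only because you stated your H\"older bound for pairs lying in a common member of $\mathcal{R}$. The repair is immediate: the generators belong to $Diff^{1+\alpha}(M)$, so $y\mapsto\log\|Df_i(y)^{-1}\|$ is $\alpha$-H\"older, in particular uniformly continuous, on all of the compact manifold $M$, and the estimate $C_0\,\mathrm{dist}(x,y)^\alpha\le C_0r^\alpha\le c/2$ holds with no constraint on which partition elements $x$ and $y$ meet. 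With that correction your proof closes and agrees with the paper's.
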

\begin{proof}
By continuity we can choose $r > 0$ small enough so that the uniform bounds $\sigma_1$ and $\sigma_2$ in conditions $(A1)$ and $(A2)$ hold for the mappings $f_i$ in $r$-neighborhoods of $R_i$, $i=1, \ldots, p+q$.
Moreover, we take $r$ small enough so that the inverse
of the exponential map $exp_x$ is defined on the $r$-neighborhood of every point $x \in M$ and it is isometry on $B(x, r)$.

Now, by continuity and compactness of $M$ we can take $r > 0$ small enough such that if $n$ is a hyperbolic time for $x$ with itinerary $\omega= (\omega_0, \omega_1, \ldots)$ then
$$\|Df_{\omega_0}(y)^{-1}\| \leq e^{\frac{c}{2}} \|Df_{\omega_0}(x)^{-1}\|,$$ whenever $y \in B(x, r e^{\frac{-c}{2}}).$ Since $B(x, r e^{\frac{-nc}{2}}) \subset B(x, r e^{\frac{-c}{2}})$, the statement of the lemma follows by the above inequality.
\end{proof}
A \emph{dynamical ball} of center $x$, itinerary $\omega \in \Sigma^k_+$, radius $r$, and length $n \geq 1$ is defined by
$$B(\omega, x, n, r)=\{y \in M: dist(f_\omega^j(x), f_\omega^j(y))\leq r, \forall \ 0\leq j \leq n\}.$$
\begin{proposition}\label{preball}
There exists $r>0$ such that if $n$ is a hyperbolic time for $x\in\mathcal{H}$, then there exists a neighborhood $V(x, n)$ of $x$ for which the following hold:
\\1) $f_\omega^n$ maps $V(x, n)$ diffeomorphically onto $B(f_\omega^n(x),r)$, where $B(f_\omega^n(x),r)$ is a ball of radius $r$ and with center $f^n_\omega(x)$;
 \\ 2) for every $y\in V(x, n)$ and $1\leq j\leq n$, we have
 \begin{center}
 $\Vert D(f^j_{\sigma_{\omega}^{n-j}})^{-1}(z)\| \leq e^ {\frac{-jc}{2}},$
 \end{center}
 where $z$ belongs to $B(f_\omega^n(x),r)$.
\end{proposition}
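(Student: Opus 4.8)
The plan is to run the pull-back scheme of Alves--Bonatti--Viana \cite{ABV} along the orbit of $x$, adapted to the non-autonomous cocycle generated by the finite family. Write $x_s:=f_\omega^s(x)$ for $0\le s\le n$, and for $0\le j\le n$ let $g_j$ be the branch of $(f^j_{\sigma_\omega^{n-j}})^{-1}=(f_{\omega_{n-1}}\circ\cdots\circ f_{\omega_{n-j}})^{-1}$ that carries $x_n$ to $x_{n-j}$; thus $g_0=\mathrm{Id}$, $g_n$ agrees with $(f_\omega^n)^{-1}$ on a neighbourhood of $x_n$, and $g_{j+1}=f_{\omega_{n-j-1}}^{-1}\circ g_j$. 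Since every $f_i$ is a global $C^{1+\alpha}$-diffeomorphism of $M$, so is $f_\omega^n$; hence, once $r$ is fixed, we may simply put $V(x,n):=(f_\omega^n)^{-1}\big(B(f_\omega^n(x),r)\big)$, and assertion $(1)$ is then immediate. All the content lies in choosing $r$ \emph{uniformly} --- independently of $x$, $n$ and $\omega$ --- and in proving the contraction estimate $(2)$, which is precisely the inequality $\Vert Dg_j(z)\Vert\le e^{-jc/2}$ for every $z\in B(f_\omega^n(x),r)$ and every $0\le j\le n$.

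First I would fix the constants. Put $L_0:=\max_{1\le i\le p+q}\sup_{z\in M}\Vert Df_i(z)^{-1}\Vert$ (finite by compactness of $M$); each $f_i^{-1}$ then contracts Riemannian distances by at most the factor $L_0$ on small balls. Arguing as in the proof of the preceding lemma --- by uniform continuity of the maps $Df_i$, compactness of $M$, and the fact that $c$ is a fixed constant --- I would choose $r>0$, depending only on $\{f_1,\dots,f_{p+q}\}$ and on $c$, small enough that: (i) every ball of radius $r$ in $M$ is geodesically convex; and (ii) for every $i$ and all $z,z'\in M$ with $dist(z,z')\le L_0r$ one has $\Vert Df_i(z')^{-1}\Vert\le e^{c/2}\,\Vert Df_i(z)^{-1}\Vert$. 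The slack $L_0r$ in (ii) is the key point: a single pull-back of the $r$-ball through one of the $f_i^{-1}$ keeps the relevant orbit point inside a fixed distortion window around the true orbit point, so the distortion scale does not compound as we iterate.

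The estimate is then proved by induction on $j$, the hypothesis at level $j$ being $\Vert Dg_j(z)\Vert\le e^{-jc/2}$ for all $z\in B(x_n,r)$; the case $j=0$ is trivial. From this hypothesis it follows --- by joining $z$ to $x_n$ by the minimizing geodesic, which stays in the convex ball, and transporting it by $g_j$ --- that $W_j:=g_j(B(x_n,r))\subseteq B(x_{n-j},re^{-jc/2})\subseteq B(x_{n-j},r)$. For the inductive step, fix $z\in B(x_n,r)$ and set $w:=f_{\omega_{n-j-1}}^{-1}(g_j(z))$; since $g_j(z)\in W_j\subseteq B(x_{n-j},r)$ we get $dist(w,x_{n-j-1})\le L_0r$, so (ii) gives $\Vert Df_{\omega_{n-j-1}}(w)^{-1}\Vert\le e^{c/2}\Vert Df_{\omega_{n-j-1}}(x_{n-j-1})^{-1}\Vert$. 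By the chain rule $Dg_{j+1}(z)=Df_{\omega_{n-j-1}}(w)^{-1}\circ Dg_j(z)$, so iterating the hypothesis down to $g_0=\mathrm{Id}$ yields
\[
\Vert Dg_{j+1}(z)\Vert\ \le\ e^{(j+1)c/2}\prod_{i=n-j-1}^{n-1}\Vert Df_{\omega_i}(f_\omega^i(x))^{-1}\Vert\ \le\ e^{(j+1)c/2}\,e^{-(j+1)c}\ =\ e^{-(j+1)c/2},
\]
where the middle inequality is exactly the defining inequality (\ref{2}) of a hyperbolic time, taken with $k=j+1$. This closes the induction and gives $(2)$; taking $z=f_\omega^n(y)$ with $y\in V(x,n)$ recovers the form in the statement.

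The single genuinely delicate point is the bookkeeping just indicated: one must make sure that pulling the $r$-ball back one map at a time never lets the orbit leave the window where the distortion bound $e^{c/2}$ holds, and that the resulting $r$ is a constant of $\{f_1,\dots,f_{p+q}\}$ and $c$ alone. Everything else is the standard \cite{ABV} pull-back argument, which transfers with no new difficulty precisely because every map occurring along any itinerary belongs to the fixed finite set; in particular neither the Markov property $(A1)$ nor $(A3)$ enters here, this lemma being only a statement about the derivative cocycle at a hyperbolic time.
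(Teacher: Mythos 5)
Your argument is correct and is essentially the proof the paper intends: the paper disposes of this proposition with a one-line citation to Proposition 4.9 of \cite{V} together with its preceding distortion lemma, and your inductive pull-back of the $r$-ball --- controlled by the uniform $e^{c/2}$-distortion window and the defining inequality of a hyperbolic time with $k=j+1$ --- is exactly that standard argument written out in full. The only simplification you rightly exploit is that each $f_i$ is a global diffeomorphism, so the inverse branches exist trivially and part (1) is immediate; there are no gaps.
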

\begin{proof}
Analogous to the proof of Proposition 4.9 of \cite{V} and according to Lemma 4.6.
\end{proof}
It is not hard to see that the neighborhood $V(x, n)$ is a dynamical ball about $x$.
\begin{corollary}
For every
 $1\leq j\leq n$ and $x,y$ in the closure of any $C^{n}=C^{n}[\omega_{0},\dots ,\omega_{n-1}]\in \mathcal{C}^{n}_{h}$ we have:
 $$d_{f^{n-j}_{\omega}(\bar{C}^{n})}(f^{n-j}_{\omega}(x),f^{n-j}_{\omega}(y))\leq e^{\frac{-jc}{2}}d_{f_\omega ^{n}(\bar{C}^{n})}(f^{n}_{\omega}(x),f^{n}_{\omega}(y))\leq K_{2}e^{\frac{-jc}{2}},$$
 where $f_\omega ^j= f_{\omega_{j-1}}\circ \ldots \circ f_{\omega_0}$.
\end{corollary}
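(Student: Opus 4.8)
The statement to prove is the Corollary following Proposition \ref{preball}: for every $1\le j\le n$ and $x,y$ in the closure of any hyperbolic cylinder $C^n=C^n[\omega_0,\dots,\omega_{n-1}]\in\mathcal{C}^n_h$,
\[
d_{f^{n-j}_\omega(\bar C^n)}\!\bigl(f^{n-j}_\omega(x),f^{n-j}_\omega(y)\bigr)\le e^{-jc/2}\,d_{f^n_\omega(\bar C^n)}\!\bigl(f^n_\omega(x),f^n_\omega(y)\bigr)\le K_2 e^{-jc/2}.
\]

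The plan is to reduce the inner-distance contraction estimate to the pointwise derivative bound in item (2) of Proposition \ref{preball}, applied along the branch $f^j_{\sigma^{n-j}\omega}=f_{\omega_{n-1}}\circ\cdots\circ f_{\omega_{n-j}}$, which maps $f^{n-j}_\omega(\bar C^n)$ onto $f^n_\omega(\bar C^n)$. First I would fix a rectifiable curve $\gamma$ joining $f^{n-j}_\omega(x)$ to $f^{n-j}_\omega(y)$ inside $f^{n-j}_\omega(\bar C^n)$ whose length is arbitrarily close to the inner distance $d_{f^{n-j}_\omega(\bar C^n)}(f^{n-j}_\omega(x),f^{n-j}_\omega(y))$ — such curves exist because, by assumption $(A_0)$ together with the Remark preceding the Corollary, the iterated closures $f^{n-j}_\omega(\bar C^n)=\bar C^{j}[\omega_{n-j},\dots,\omega_{n-1}]$ have finite inner diameter. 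The image curve $f^j_{\sigma^{n-j}\omega}(\gamma)$ then joins $f^n_\omega(x)$ to $f^n_\omega(y)$ inside $f^n_\omega(\bar C^n)$, so its length bounds $d_{f^n_\omega(\bar C^n)}(f^n_\omega(x),f^n_\omega(y))$ from above.

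Next I would estimate $\mathrm{length}\bigl(f^j_{\sigma^{n-j}\omega}(\gamma)\bigr)$ by integrating $\|Df^j_{\sigma^{n-j}\omega}\|$ along $\gamma$. To turn this into a contraction in the right direction, I would instead run the argument backwards: take a near-minimizing curve $\eta$ joining $f^n_\omega(x)$ to $f^n_\omega(y)$ inside $f^n_\omega(\bar C^n)=B$-type image (its inner diameter is $\le K_2$ by the Remark, giving the second inequality immediately), pull it back by the inverse branch $(f^j_{\sigma^{n-j}\omega})^{-1}$, which is well defined on $f^n_\omega(\bar C^n)$ since each $f_i$ is a diffeomorphism and $(A_1)$ guarantees $f^j_{\sigma^{n-j}\omega}$ carries $\bar C^j[\omega_{n-j},\dots,\omega_{n-1}]$ onto the appropriate image. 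Item (2) of Proposition \ref{preball} gives $\|D(f^j_{\sigma^{n-j}\omega})^{-1}(z)\|\le e^{-jc/2}$ for all $z$ in the relevant image ball, hence the pulled-back curve has length at most $e^{-jc/2}\,\mathrm{length}(\eta)$, and it lies inside $f^{n-j}_\omega(\bar C^n)$; taking the infimum over $\eta$ yields the first inequality. The second inequality is then just $d_{f^n_\omega(\bar C^n)}(f^n_\omega(x),f^n_\omega(y))\le \operatorname{diam}_{\mathrm{inner}}\bigl(f^n_\omega(\bar C^n)\bigr)\le K_2$, which is exactly the content of the Remark (the final image is $f_{\omega_{n-1}}(\bar R_{\omega_{n-1}})$, of inner diameter $\le K_2$).

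The main obstacle I anticipate is bookkeeping about which curves stay inside which sets: one must be careful that the derivative bound from Proposition \ref{preball}(2) is stated for $z\in B(f^n_\omega(x),r)$, whereas here the natural domain is the inner-distance neighborhood $f^n_\omega(\bar C^n)$, so I would need to check that $\bar C^n$ being a \emph{hyperbolic} cylinder forces $f^n_\omega(\bar C^n)$ to be contained in (or comparable to) the ball $B(f^n_\omega(x),r)$ of Proposition \ref{preball} — this is where the defining property of $\mathcal{C}^n_h$ and the choice of $r$ enter, and it is the only genuinely non-routine point; the rest is the standard "integrate the norm of the derivative of the inverse along a near-geodesic" argument, together with invoking the Remark for the uniform bound $K_2$.
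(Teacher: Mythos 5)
Your final argument --- pull back a near-minimizing curve joining $f^n_\omega(x)$ to $f^n_\omega(y)$ inside $f^n_\omega(\bar C^n)$ through the inverse branch, apply the bound $\|D(f^j_{\sigma^{n-j}\omega})^{-1}(z)\|\le e^{-jc/2}$ from item (2) of the Proposition, and invoke the Remark for the $K_2$ bound on the inner diameter of $f^n_\omega(\bar C^n)=f_{\omega_{n-1}}(\bar R_{\omega_{n-1}})$ --- is exactly the paper's proof, which simply says that any such curve ``lifts'' to a curve in $f^{n-j}_\omega(\bar C^n)$ and concludes by the Proposition. The domain subtlety you flag (the derivative bound being stated on $B(f^n_\omega(x),r)$ rather than on $f^n_\omega(\bar C^n)$) is real but is glossed over by the paper as well, so your proposal is correct and essentially identical in approach.
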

\begin{proof}
Any curve joining $f^{n}_{\omega}(x)$ to $f^{n}_{\omega}(y)$ inside $f^{n}_{\omega}(\bar{C}^{n})$ lifts to a
unique curve joining $f^{n-j}_{\omega}(x)$ to $f^{n-j}_{\omega}(y)$ inside $f^{n-j}_{\omega}(\bar{C}^{n})$. Now we conclude the result by the above proposition.
\end{proof}
\begin{lemma}
There is $L_1>0$ such that for any $x,y$ in closure of any $C^{n}=C^n[\omega_0, \ldots, \omega_{n-1}] \in \mathcal{C}^{n}_{h}$
$$L^{-1}_{1}\leq \frac{\mid det Df^{n}_{\omega}(x)\mid}{\mid det Df^{n}_{\omega}(y)\mid}\leq L_{1}.$$
\end{lemma}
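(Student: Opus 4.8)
The plan is to establish the bounded distortion estimate by the standard Hölder-control argument of Alves--Bonatti--Viana, adapted to the orbital setting. Writing $\log|\det Df^n_\omega(x)| = \sum_{j=0}^{n-1} \log|\det Df_{\omega_j}(f^j_\omega(x))|$, and similarly for $y$, it suffices to bound
\begin{equation*}
\left|\log\frac{|\det Df^n_\omega(x)|}{|\det Df^n_\omega(y)|}\right| \leq \sum_{j=0}^{n-1}\left|\log|\det Df_{\omega_j}(f^j_\omega(x))| - \log|\det Df_{\omega_j}(f^j_\omega(y))|\right|
\end{equation*}
uniformly in $n$ and in the hyperbolic cylinder $C^n$. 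First I would use that each $f_i$ is $C^{1+\alpha}$ on the compact manifold $M$, so $x\mapsto \log|\det Df_i(x)|$ is $\alpha$-Hölder: there is a constant $C_0>0$ with $\big|\log|\det Df_i(u)| - \log|\det Df_i(v)|\big| \leq C_0\, \mathrm{dist}(u,v)^\alpha$ for all $i$ and all $u,v$ in (a neighborhood of) $\overline{R_i}$. Hence each summand is bounded by $C_0\,\mathrm{dist}(f^j_\omega(x), f^j_\omega(y))^\alpha$.

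The key step is then the contraction-along-the-orbit estimate coming from hyperbolicity. Applying the Corollary immediately preceding this lemma with $n-j$ in place of $j$ (equivalently, $j$ in place of $n-j$), for $x,y\in\overline{C^n}$ and $0\le j\le n-1$ one has
\begin{equation*}
\mathrm{dist}(f^j_\omega(x), f^j_\omega(y)) \leq d_{f^j_\omega(\overline{C^n})}(f^j_\omega(x), f^j_\omega(y)) \leq K_2\, e^{-\frac{(n-j)c}{2}}.
\end{equation*}
Therefore
\begin{equation*}
\left|\log\frac{|\det Df^n_\omega(x)|}{|\det Df^n_\omega(y)|}\right| \leq C_0 K_2^\alpha \sum_{j=0}^{n-1} e^{-\frac{(n-j)c\alpha}{2}} \leq C_0 K_2^\alpha \sum_{i=1}^{\infty} e^{-\frac{ic\alpha}{2}} =: \log L_1,
\end{equation*}
which is finite and independent of $n$, of $\omega$, and of the particular hyperbolic cylinder. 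Exponentiating gives $L_1^{-1} \leq |\det Df^n_\omega(x)|/|\det Df^n_\omega(y)| \leq L_1$, as claimed.

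I expect the main technical point — really the only place care is needed — to be the transition from the Riemannian distance $\mathrm{dist}(f^j_\omega(x),f^j_\omega(y))$ to the intrinsic distance $d_{f^j_\omega(\overline{C^n})}$ inside the image of the cylinder, so that the Corollary actually applies. This is where assumption $(A_0)$ on finite inner diameter of the $\overline{R_i}$, together with the bound $K_2$ on the inner diameter of the last-iterate image $f_{\omega_{n-1}}(\overline{R}_{\omega_{n-1}})$ from the Remark, is used: it guarantees the intermediate images $f^j_\omega(\overline{C^n})$ are path-connected domains of controlled size on which the Corollary's contraction estimate is meaningful, and that $\mathrm{dist}\le d_{f^j_\omega(\overline{C^n})}$. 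One should also note that the Hölder constant $C_0$ may be taken uniform over the $r$-neighborhoods of the $R_i$ used in Lemma 4.6 and Proposition 4.7, so that the orbit points $f^j_\omega(y)$ for $y\in\overline{C^n}$ stay in the region where the Hölder bound is valid; this is automatic once $r$ is fixed as in those statements. Everything else is the geometric-series summation above.
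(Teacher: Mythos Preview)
Your proof is correct and follows essentially the same route as the paper: decompose $\log|\det Df^n_\omega|$ via the chain rule, use the $\alpha$-H\"older continuity of $\log|\det Df_i|$ with a uniform constant $C_0$, apply Corollary~4.8 to get $d_{f^j_\omega(\overline{C^n})}(f^j_\omega(x),f^j_\omega(y))\le K_2 e^{-(n-j)c/2}$, and sum the resulting geometric series to define $L_1$. Your discussion of the passage from the Riemannian distance to the intrinsic distance and of the uniformity of $C_0$ is a welcome clarification of points the paper leaves implicit.
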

\begin{proof}
Note that by assumption $g_{j}=log \mid det Df_{j{\mid \bar{R}_{j}}}\mid$, for $j=1,\dots ,p+q$, is $\alpha$-H\"{o}lder and thus for each $ x,y \in R_{j}$, it holds that
$$\mid g_{j}(x)-g_{j}(y)\mid \leq C_0 d(x,y)^{\alpha},$$ for some constants $C_0> 0$ and $\alpha>0$.
Now, suppose that $C^{n}= C^n[\omega_0, \ldots, \omega_{n-1}]$ is a hyperbolic cylinder and $x, y \in \overline{C}^n$. This means that $x, y \in R_{\omega_0}, \ f_{\omega_0}(x), f_{\omega_0}(y) \in R_{\omega_1}$ and for each $1 \leq i \leq n$, $f_\omega ^i(x), f_\omega ^i(y) \in R_{\omega_i}$. According to Corollary 4.8, for each $1 \leq j \leq n$ and $x, y \in \overline{C}^n$ it holds that
$$d_{f^{n-j}_{\omega}(\bar{C}^{n})}(f^{n-j}_{\omega}(x),f^{n-j}_{\omega}(y))\leq e^{\frac{-jc}{2}}d_{f_\omega^{n}(\bar{C}^{n})}(f^{n}_{\omega}(x),f^{n}_{\omega}(y))\leq K_{2}e^{\frac{-jc}{2}},$$
and therefore,
$$log \frac{\mid det Df^{n}_{\omega}(x)\mid}{\mid det Df^{n}_{\omega}(y)\mid}=\sum^{n-1}_{i=0}\mid g_{\omega_{i}}(f^{i}_{\omega}(x))-g_{\omega_{i}}(f^{i}_{\omega}(y))\mid \leq \sum^{n-1}_{i=0}C_0d_{f_\omega^j(\overline{C}^n)}(f^{i}_{\omega}(x),f^{i}_{\omega}(y))^{\alpha}$$
$$\leq C_0 K_2^\alpha \sum^{n-1}_{i=0}e^{\frac{-(n-i)c\alpha}{2}} \leq C_0 K_2^\alpha \sum^{\infty}_{i=0} e^{\frac{-ic \alpha}{2}}.$$
Take $L_{1}=\exp(C_0 K_2^\alpha \sum^{\infty}_{i=0} e^{\frac{-ic \alpha}{2}}).$
\end{proof}
\begin{corollary}
There exists a constant $L_2>0$ such that if $C^{n}=C^n[\omega_0, \ldots, \omega_{n-1}]$ is a hyperbolic cylinder and $A_1, A_2$ are two subsets of $\overline{C}^n$ then
 \begin{align}\label{10}
 L_2^{-1} \dfrac{m(A_1)}{m(A_2)}\leq\dfrac{m(f_\omega^n(A_1))}{m(f_\omega^n(A_2))}\leq L_2\dfrac{m(A_1)}{m(A_2)}
 \end{align}
\end{corollary}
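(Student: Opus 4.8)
The plan is to deduce the corollary as a direct consequence of the bounded-distortion estimate in the preceding lemma, using the change-of-variables formula together with the diffeomorphism property of $f_\omega^n$ on $\overline{C}^n$ (which holds because $C^n$ is a hyperbolic cylinder, so by Proposition~\ref{preball} the map $f_\omega^n$ is a diffeomorphism onto its image in a neighborhood of $\overline{C}^n$, in particular injective on $\overline{C}^n$). First I would write, for any measurable $A\subset\overline{C}^n$,
$$
m(f_\omega^n(A))=\int_A |\det Df_\omega^n(x)|\,dm(x),
$$
so that the ratio $m(f_\omega^n(A_1))/m(f_\omega^n(A_2))$ is a quotient of two integrals of the same function $x\mapsto|\det Df_\omega^n(x)|$ over $A_1$ and $A_2$ respectively.

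Next I would invoke the previous lemma: there is $L_1>0$ with
$$
L_1^{-1}\le\frac{|\det Df_\omega^n(x)|}{|\det Df_\omega^n(y)|}\le L_1
\qquad\text{for all }x,y\in\overline{C}^n.
$$
Fixing any reference point $y_0\in\overline{C}^n$ and writing $\Phi(y_0)=|\det Df_\omega^n(y_0)|$, this says $L_1^{-1}\Phi(y_0)\le |\det Df_\omega^n(x)|\le L_1\Phi(y_0)$ for every $x\in\overline{C}^n$. Plugging these two-sided bounds into the integral representation of $m(f_\omega^n(A_i))$ gives
$$
L_1^{-1}\Phi(y_0)\,m(A_i)\le m(f_\omega^n(A_i))\le L_1\Phi(y_0)\,m(A_i),\qquad i=1,2.
$$
Dividing the upper bound for $i=1$ by the lower bound for $i=2$ (and symmetrically) yields
$$
L_1^{-2}\,\frac{m(A_1)}{m(A_2)}\le\frac{m(f_\omega^n(A_1))}{m(f_\omega^n(A_2))}\le L_1^{2}\,\frac{m(A_1)}{m(A_2)},
$$
so the corollary holds with $L_2=L_1^2$.

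There is essentially no serious obstacle here; the corollary is a soft repackaging of the distortion lemma. The only points requiring a word of care are: (i) justifying the change-of-variables formula, which needs $f_\omega^n$ to be injective and $C^1$ on $\overline{C}^n$ — injectivity follows from the hyperbolic-cylinder structure via Proposition~\ref{preball}, and smoothness is immediate since each $f_i$ is $C^{1+\alpha}$; and (ii) making sure the constant $L_2$ depends only on $\mathcal{G}^+$ (through $C_0$, $\alpha$, $K_2$, $c$) and not on $n$ or on the particular cylinder, which is already guaranteed because $L_1$ in the previous lemma has that uniformity. One could equally phrase the argument without introducing $y_0$, by noting directly that
$$
\frac{m(f_\omega^n(A_1))}{m(f_\omega^n(A_2))}
=\frac{\int_{A_1}|\det Df_\omega^n|\,dm}{\int_{A_2}|\det Df_\omega^n|\,dm}
$$
and estimating the numerator above and the denominator below by applying the pointwise distortion bound against a common value; both routes give the same constant $L_2=L_1^2$.
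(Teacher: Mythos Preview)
Your argument is correct and matches the paper's approach: apply the change-of-variables formula and the distortion bound from the preceding lemma to obtain $L_2=L_1^{2}$. One minor simplification: injectivity of $f_\omega^n$ on $\overline{C}^n$ needs no appeal to Proposition~\ref{preball}, since each $f_i$ is already a global diffeomorphism of $M$ and hence so is the composition $f_\omega^n$.
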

\begin{proof}
We apply the change of variable formula for $f_\omega^n$ and analogous to Corollaries 4.10, 4.11 and 4.12 of \cite{V}, we conclude the result.
\end{proof}
\section{\bf{Proof of Theorem A}}\label{non}
Let us take a finite family $\{f_1, \ldots, f_{p+q}\}\subset \mathcal{G}^+$ of $C^{1+\alpha}$ diffeomorphisms on $M$ and a Markov partition $\mathcal{R}=\{R_1, \ldots, R_{p+q}\}$ for which the conditions $(A0)$, $(A1)$, $(A2)$ and $(A3)$ hold. Moreover, suppose that $\mathcal{G}^+$ has weak-cycle property. We will prove that the semi-group action $\mathcal{G}^+$ is ergodic with respect to the Lebesgue measure.
We take $\mathcal{A}$ the set of points $x \in M$ that belong to the closure of any hyperbolic cylinder $C^n$ for infinitely many values $n_1(x)< n_2(x) < \ldots< n_k(x) < \ldots$ of $n$.
Clearly $\mathcal{H}\subseteq \mathcal{A}$. In particular, each $x \in \mathcal{A}$ has infinitely many hyperbolic times and hence $m(\mathcal{A})=1$.

Now, for each $n \geq 1$, we consider a covering $\mathcal{A}_n$ of $\mathcal{A}$ by pairwise disjoint measurable sets such that $A_n \in \mathcal{A}_n$ satisfying $C^m \subseteq A_n \subseteq \overline{C}^m$
for some $C^m \in \mathcal{C}_h^m$ and $m\geq n$. Indeed, we can take the family $\mathcal{A}_n$ which consists of cylinders $C^m \in \mathcal{C}_h^m$ with $m\geq n$ that are not contained in any
$C^k \in \mathcal{C}_h^k$ with $m>k \geq n$.
\begin{proposition}
Suppose that $B \subset M$ is a measurable $\mathcal{G}^+$-forward invariant subset with positive Lebesgue measure. Then there exists an element $R_i$ of the topological partition $\mathcal{R}$ such that $m(R_i \cap B^c)=0$.
\end{proposition}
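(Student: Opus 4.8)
The plan is to locate a hyperbolic cylinder inside which the complement $B^{c}=M\setminus B$ occupies an arbitrarily small fraction of the measure, and then to transport it forward by the corresponding composition of generators. Since $\mathcal{R}$ is Markov for $\mathcal{F}$, by $(A_{1})$ and the closure formula of Remark~4.2 the image $f^{n}_{\omega}\bigl(C^{n}[\omega_{0},\dots,\omega_{n-1}]\bigr)$ of any cylinder equals $f_{\omega_{n-1}}(R_{\omega_{n-1}})$, and $(A_{1})$ together with $m(\partial R_{i})=0$ from $(A_{0})$ forces this set to coincide, up to a Lebesgue-null set, with a union $U=\bigcup_{j\in J}R_{j}$ of elements of $\mathcal{R}$. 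As there are only finitely many such unions, it is enough to produce hyperbolic cylinders $C^{n}$ with $m(C^{n}\cap B^{c})/m(C^{n})\to 0$: a pigeonhole argument then yields one fixed $U$ with $m(U\cap B^{c})=0$, and since the $R_{j}$ are pairwise disjoint with null boundaries this gives $m(R_{j}\cap B^{c})=0$ for every $j\in J$, in particular for at least one element $R_{i}$ of $\mathcal{R}$, which is the assertion.

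The transport step combines forward invariance of $B$ with bounded distortion. Fix a hyperbolic cylinder $C=C^{n}[\omega_{0},\dots,\omega_{n-1}]$ and let $U$ be its image union. If $y\in f^{n}_{\omega}(C)\cap B^{c}$ and $y=f^{n}_{\omega}(z)$ with $z\in C$, then necessarily $z\notin B$, since otherwise $y=f^{n}_{\omega}(z)\in f^{n}_{\omega}(B)\subset B$; hence $f^{n}_{\omega}(C)\cap B^{c}\subset f^{n}_{\omega}(C\cap B^{c})$, and Corollary~4.10 applied with $A_{1}=C\cap B^{c}$, $A_{2}=C$ yields
\[
\frac{m(U\cap B^{c})}{m(U)}=\frac{m\bigl(f^{n}_{\omega}(C)\cap B^{c}\bigr)}{m\bigl(f^{n}_{\omega}(C)\bigr)}\le\frac{m\bigl(f^{n}_{\omega}(C\cap B^{c})\bigr)}{m\bigl(f^{n}_{\omega}(C)\bigr)}\le L_{2}\,\frac{m(C\cap B^{c})}{m(C)}.
\]
Thus it suffices to find hyperbolic cylinders inside which $B^{c}$ has vanishing relative measure.

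For that I would use the covering families $\mathcal{A}_{n}$ of $\mathcal{A}$. One checks that $\mathcal{A}_{n+1}$ refines $\mathcal{A}_{n}$ (the element of $\mathcal{A}_{n+1}$ through a point is a subcylinder of the element of $\mathcal{A}_{n}$ through it), so the $\sigma$-algebras $\sigma(\mathcal{A}_{n})$ form an increasing, countably generated filtration; and on the full-measure set $\mathcal{A}$ the tail $\bigvee_{n}\sigma(\mathcal{A}_{n})$ separates points, since for $x\in\mathcal{A}$ the element $A_{n}(x)\in\mathcal{A}_{n}$ through $x$ is a hyperbolic cylinder of length $\ge n$, hence of diameter $\le K_{2}e^{-nc/2}$ by Corollary~4.8, so $\bigcap_{n}A_{n}(x)=\{x\}$. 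Moreover every nonempty cylinder has positive Lebesgue measure — an induction on the length using that $(A_{1})$ gives $f_{\omega_{0}}(R_{\omega_{0}})\supset R_{\omega_{1}}$, so $C^{n}[\omega_{0},\dots,\omega_{n-1}]$ is a diffeomorphic image of $C^{n-1}[\omega_{1},\dots,\omega_{n-1}]$ sitting inside $R_{\omega_{0}}$. Hence $\bigvee_{n}\sigma(\mathcal{A}_{n})$ is the Borel $\sigma$-algebra modulo $m$-null sets, and the increasing martingale convergence theorem applied to $\mathbf{1}_{B}$ gives
\[
\frac{m\bigl(A_{n}(x)\cap B\bigr)}{m\bigl(A_{n}(x)\bigr)}\longrightarrow\mathbf{1}_{B}(x)\qquad\text{for }m\text{-a.e. }x.
\]
Since $m(B)>0$, pick $x_{0}\in B\cap\mathcal{H}$ at which this convergence holds; then the hyperbolic cylinders $A_{n}(x_{0})$ satisfy $m\bigl(A_{n}(x_{0})\cap B^{c}\bigr)/m\bigl(A_{n}(x_{0})\bigr)\to 0$, which is exactly what the preceding step needs, completing the proof.

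I expect the last paragraph to be the delicate point: hyperbolic cylinders are in general strongly anisotropic — conditions $(A_{2})$, $(A_{3})$ bound only the weakest expansion along an orbit, not the shape distortion of the cylinders — so the ordinary Lebesgue density theorem for Riemannian balls cannot be applied directly, and one must instead extract the density statement from the measurable filtration structure of the cylinders, which is why the shrinking of hyperbolic cylinders and the positivity of cylinder measures are precisely the facts that have to be verified.
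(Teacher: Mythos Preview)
Your argument is correct and follows the same architecture as the paper's: find a hyperbolic cylinder on which $B^{c}$ has arbitrarily small relative measure, push it forward using bounded distortion and forward invariance of $B$, and pigeonhole on the finitely many possible images to pin down an $R_{j}$ with $m(R_{j}\cap B^{c})=0$. The transport step and the pigeonhole step match the paper exactly (the paper in fact only uses that the image contains some single $R_{j}$, while you observe the stronger fact that it equals a union of partition elements up to null sets; either is enough).

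The one genuine difference is how you obtain the density cylinders. The paper proves a Lebesgue-type density lemma by hand: approximate $B$ and $B^{c}$ from inside by compact sets $K,K'$, note that for large $n$ every element of $\mathcal{A}_{n}$ has diameter below $\mathrm{dist}(K,K')$, and conclude that $B$ is well approximated by a union of elements of $\mathcal{A}_{n}$; a counting argument then gives a single $A_{n}$ with $m(A_{n}\cap B)/m(A_{n})>1-\varepsilon$. You instead verify that $(\mathcal{A}_{n})_{n}$ is a refining sequence of measurable partitions with diameters tending to zero, so that the generated $\sigma$-algebras increase to the Borel $\sigma$-algebra mod null sets, and invoke the increasing martingale convergence theorem for $\mathbf{1}_{B}$. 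Both routes are valid; the paper's is more elementary and self-contained, while yours is cleaner once the filtration properties are checked, and has the mild advantage of yielding a fixed point $x_{0}$ along which the whole sequence $A_{n}(x_{0})$ works rather than extracting one cylinder per $\varepsilon$. Your closing remark that Riemannian-ball density is unavailable because hyperbolic cylinders may be highly anisotropic is exactly the reason the paper also works through the filtration $\mathcal{A}_{n}$ rather than through metric balls.
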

\begin{proof}
Let $B \subset M$ be a measurable $\mathcal{G}^+$-forward invariant subset with positive Lebesgue measure.
 The next claim is needed.
\begin{claim}
Given $\delta >0$, there exist $n\geq 1$ and a subset $\{A_{n,i}: i\in I\}$ of $\mathcal{A}_{n}$ such that
$$m(B\bigtriangleup \bigcup_{i\in I}A_{n,i})\leq \delta.$$
\end{claim}
Indeed, it is enough to apply the technique of [Lemma 3.11, \cite{OV}] to our setting. Take compact subsets $K \subset B$ and $K^{\prime}\subset B^c$ such that $$m(B\bigtriangleup K)\leq\frac{\delta}{3}, \  and \
m(B^c \bigtriangleup K^{\prime})\leq\frac{\delta}{3}.$$ We set $\rho = dist(K, K^{\prime})$. By Corollary 4.8, one has $diam (\mathcal{A}_n)\leq K_2 e^{\frac{-nc}{2}}< \rho$ provided that $n$ is large enough.
Since $\mathcal{A}$ is covered by $\mathcal{A}_n$, so there exists a family $\{A_{n,i}: i\in I\}\subset \mathcal{A}_n$ so that $m(K \setminus \bigcup_{i\in I}A_{n,i})\leq \frac{\delta}{3}.$
Since all $A_{i,n}, \ i \in I$, intersect $K$, they are disjoint from $K^{\prime}$.
Therefore,
$$m(B\bigtriangleup \bigcup_{i\in I}A_{n,i})\leq m(B\setminus K)+m(K \setminus \bigcup_{i\in I}A_{n,i})+
m(B^c \setminus K^{\prime})\leq \delta.$$
Now, according to the above claim and the approach of [Corollary 3.12, \cite{OV}] the following claim holds.
\begin{claim}
For each $\epsilon >0$, there exist $n\geq 1$ and $A_n \in \mathcal{A}_n$ such that
$$\frac{m(B\cap A_n)}{m(A_n)}>1-\epsilon.$$
\end{claim}
By Claim 5.3, there exists a sequence $A_{n}$ of measurable sets for which the following holds: $C^{m_{n}}\subset A_n \subset \bar{C}^{m_{n}}$, for some hyperbolic cylinder $C^{m_{n}}$ with $m_{n}\geq n$ and
$$\frac{m(A_n \cap B^{c})}{m(A_n)} \to 0, \ n \to \infty.$$
Then, according to the distortion Lemma 4.9, Corollary 4.10 and the assumption that $B$ is forward invariant, one has that
\begin{equation}\label{2}
\frac{m(f^{m_{n}}_{\omega({m_n})}(A_n)\cap B^{c})}{m(f^{m_{n}}_{\omega({m_n})}(A_n))}\to 0, \ when \
n \to \infty,
 \end{equation}
where $\omega({m_n})$ is the itinerary of the cylinder $C^{m_n}= C^{m_n}[\omega_0, \ldots, \omega_{m_n -1}]$
and $f^{m_{n}}_{\omega({m_n})}= f_{\omega_{m_n-1}} \circ \ldots \circ f_{\omega_0}$.\\
By Remark 4.6, $$R_{\omega_{m_n -1}}\subset f^{m_{n}}_{\omega(m_n)}(C^{m_{n}}) \subset f^{m_{n}}_{\omega(m_n)}(A_n),$$ for some $\omega_{m_n -1}\in \{1,\dots , p+q\}$. Fix any $j$ such that $\omega_{m_n -1}=j$ for infinitely many values of $n$. We know that $R_{j}$ is an open subset and therefore has positive $m$-measure. Then $(6)$ implies that $m(R_{j}\cap B^{c})=0$.
\end{proof}
Now, we will finish the proof of Theorem A. As you have seen in the previous proposition, for forward invariant set $B$ with $m(B) > 0$, there exists a cylinder $R_j$ satisfying $m(R_{j}\cap B^{c})=0$.
It is enough to show that $m(B)=1$.

Let us fix the cylinder $R_j$. According to weak-cycle property of $\mathcal{G}^+$, there exists a sequence
$\{g_n\}\subset \mathcal{G}^+$ such that
$$M \circeq \bigcup_{n\geq1}g_n(R_j).$$
Since $B$ is $\mathcal{G}^+$-forward invariant, $B^c$ is also $\mathcal{G}^+$-forward invariant. Hence, one has that $$g_n(R_j)\setminus B \subset g_n(R_j\setminus B), \ for \ all \ n\geq 1.$$
The Lebesgue measure $m$ is quasi-invariant for $C^1$- diffeomorphisms which imply that
$m(g_n(R_j \setminus B))=0$ and therefore, $m(g_n(R_j) \setminus B)=0$.
Since $\{g_n(R_j)\}$ is a countable family, $m(B)=1$ which terminates the proof.
\section{\bf{Transitivity criteria}}
In this section we obtain some results for transitivity of semi-group actions of homeomorphisms defined on a compact manifold $M$. We introduce some kinds of transitive property and finally we provide a strong form of transitivity that suffices to conclude ergodicity in our setting.
First, we need to formalize some notions.

Consider a finite family $\{f_1, \ldots, f_k\}$ of homeomorphisms defined on a compact manifold $M$. Let us take $\mathcal{G}^+$ the semi-group generated by these homeomorphisms. For given $x \in M$, the \emph{forward total orbit} of $x$ is defined by $$\mathcal{O}_{\mathcal{G}}^+(x)= \{h(x): h \in \mathcal{G}^+\}.$$
Analogously, the \emph{backward total orbit} of $x$ is defined by $$\mathcal{O}_{\mathcal{G}}^-(x)= \{h^{-1}(x): h \in \mathcal{G}^+\}.$$
We say that $\mathcal{G}^+$ acts \emph{minimally} on $M$ if any point has a dense forward total orbit.\\
For any sequence $\omega=(\omega_0, \omega_1, \ldots, \omega_n, \ldots) \in \Sigma_k^+$, we write
$$f_\omega^n(x)=f_{\omega_n}\circ f_\omega^{n-1}(x); \ n \in \mathbb{N} \ and \ f_\omega^0=Id.$$
The set $\{f_\omega^n(x): n \geq 0\}$ is called $\omega$-\emph{orbital branch} of $x$. We say that $x$ possesses
an \emph{orbital dense orbit} if there exists a sequence $\omega \in \Sigma_k^+$ such that $\omega$-orbital branch of $x$ is dense in $M$.

It is easy to see that if $\mathcal{G}^+$ acts minimally on $M$, then each $x$ has orbital dense orbit.\\
In a finitely generated semi-group action, if minimality of action preserves under small perturbation of generators then we say that $\mathcal{G}^+$ is \emph{robustly minimal}. Some examples of robustly minimal semi-group actions are already available, for instance see \cite{GH} and \cite{H2}. Let us mention that transitive semi-group actions have a weak form of dynamical irreducible property with respect to minimal semi-groups.

Here, we introduce three kinds of transitive property. The action of $\mathcal{G}^+$ is \emph{weak transitive}
if it possesses a point with a dense forward total orbit. We say that $\mathcal{G}^+$ is \emph{transitive} whenever $U$ and
$V$ are two open subsets of $M$, there exists $h \in \mathcal{G}^+$ such that $h(U) \cap V \neq \emptyset$.
Finally, $\mathcal{G}^+$ is \emph{strong transitive} if it admits a full measure subset $\widetilde{M} \subset M$
such that every point $x \in \widetilde{M}$ has a dense forward orbit.

In bellow, we illustrate the relationship between these different concepts of transitive property. Moreover, to obtain some results about the transitive properties of semi-group actions, we adapt some techniques from deterministic systems \cite{LP} to our setting.

A semi-group $\mathcal{G}^+$ is \emph{contractive} at point $x \in M$ whenever there exist a sequence
$\{ h_n \}\subset \mathcal{G}^+$ and open set $B$ containing $x$ for which
$diam(h_n(B)) \to 0$, as $n \to \infty$.
\begin{lemma}
Let $\{f_{1},\dots ,f_{k}\}$ be a finite family of homeomorphisms on a compact manifold $M$ and $\mathcal{G}^+$ be the semi-group generated by them. Suppose that  there exist $x\in M$ for which the total backward orbit and total forward orbit are dense in $M$. If the semi-group $\mathcal{G}^+$ is contractive at $x$, then the action of $\mathcal{G}^+$ is transitive, i.e. for each two open subsets $U$ and $V$ of $M$ there exists $h\in <\mathcal{G}>^{+}$ such that $h(U)\cup V\neq\emptyset$.\\
In particular, the point $x$ possesses an orbital branch which is dense in $M$.
\end{lemma}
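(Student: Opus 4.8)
The plan is to reduce the lemma to one topological fact about the acting semigroup, and then to bootstrap. For a nonempty open set $V\subseteq M$ write $\mathcal{O}^{-}_{\mathcal{G}}(V):=\bigcup_{h\in\mathcal{G}^{+}}h^{-1}(V)$. This set is open, being a union of open sets; I claim it is dense. Given any nonempty open $Z\subseteq M$, density of the forward total orbit of $x$ provides $h_{0}\in\mathcal{G}^{+}$ with $h_{0}(x)\in V$, i.e. $x\in h_{0}^{-1}(V)$, and density of the backward total orbit of $x$ provides $h_{1}\in\mathcal{G}^{+}$ with $h_{1}^{-1}(x)\in Z$; since $h_{1}^{-1}(x)\in h_{1}^{-1}\bigl(h_{0}^{-1}(V)\bigr)=(h_{0}h_{1})^{-1}(V)\subseteq\mathcal{O}^{-}_{\mathcal{G}}(V)$, the set $Z$ meets $\mathcal{O}^{-}_{\mathcal{G}}(V)$. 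Transitivity is now immediate: for nonempty open $U,V$ the dense open set $\mathcal{O}^{-}_{\mathcal{G}}(V)$ meets $U$, so there are $u\in U$ and $h\in\mathcal{G}^{+}$ with $h(u)\in V$, whence $h(U)\cap V\neq\emptyset$. (The contracting hypothesis plays no role so far; it is needed only for the ``in particular''.)

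For the dense orbital branch I would fix a countable basis $\{W_{j}\}_{j\ge 1}$ of $M$ and construct $\omega$ in successive finite blocks, so that the branch $\{f^{n}_{\omega}(x)\}$ meets each $W_{j}$; concretely one wants nested finite words $b^{1}\prec b^{2}\prec\cdots$ with the branch along $b^{j}$ passing through $W_{j}$, and then $\omega$ is their increasing union. Such a greedy construction closes up \emph{provided} one knows that from every point $y\in\mathcal{O}^{+}_{\mathcal{G}}(x)$ already reached, the forward total orbit $\mathcal{O}^{+}_{\mathcal{G}}(y)$ is still dense, so that the branch can be steered into the next $W_{j}$ and the inductive situation restored. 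Thus the whole content of the assertion is: every $y\in\mathcal{O}^{+}_{\mathcal{G}}(x)$ has dense forward total orbit. A first, purely algebraic, ingredient towards this is that the backward orbit already supplies a \emph{dense} set of points enjoying this property: if $z\in\mathcal{O}^{-}_{\mathcal{G}}(x)$, say $z=g_{0}^{-1}(x)$ with $g_{0}\in\mathcal{G}^{+}$, then $(h'g_{0})(z)=h'(x)$ for all $h'\in\mathcal{G}^{+}$, so $\mathcal{O}^{+}_{\mathcal{G}}(z)\supseteq\mathcal{O}^{+}_{\mathcal{G}}(x)$ is dense, and moreover $g_{0}(z)=x$; since $\mathcal{O}^{-}_{\mathcal{G}}(x)$ is dense, such ``good'' points are dense in $M$.

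It is at the last step that the contraction enters. From the hypothesis we have an open $B\ni x$ and $h_{n}\in\mathcal{G}^{+}$ with $\operatorname{diam} h_{n}(B)\to 0$; note that $B$ may be shrunk freely and that $\operatorname{diam}\varphi\bigl(h_{n}(B)\bigr)\to 0$ for each fixed $\varphi\in\mathcal{G}^{+}$, by uniform continuity on the compact manifold $M$. The idea is that a deep contracting word crushes a neighbourhood of the current branch endpoint (which one keeps inside $B$) into an arbitrarily small open set, which therefore contains one of the dense ``good'' points $z$; one then follows a finite word steering $z$ first through the target $W_{j}$ and then back into $B$ — the first leg available because $\mathcal{O}^{+}_{\mathcal{G}}(z)$ is dense and passes through $x$, the return leg being part of what has to be arranged — and, $h_{n}(B)$ being small enough, this shepherds the true endpoint along essentially the same itinerary, so it visits $W_{j}$ and returns to $B$. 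Concatenating these blocks over $j$ yields $\omega$, and running the construction with the branch starting at $x$ gives the statement. The delicate — and, I expect, the genuinely hard — point is the matching of scales in this shadowing step: the contracting word needed to land next to a good point deep inside the shrinking set $h_{n}(B)$ is itself long, hence a priori expansive, so one must fix the steering word first and only afterwards choose $n$ large enough that the composite word cannot push $h_{n}(B)$ outside the prescribed neighbourhoods. Organising this bookkeeping consistently, so that the smallness required at each stage is always available, is precisely where the \emph{quantitative} contraction $\operatorname{diam} h_{n}(B)\to 0$, rather than mere transitivity, is indispensable; the topological parts above are routine.
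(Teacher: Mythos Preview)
Your proof of transitivity is correct and cleaner than the paper's. You show directly that $\mathcal{O}^{-}_{\mathcal{G}}(V)=\bigcup_{h\in\mathcal{G}^{+}}h^{-1}(V)$ is open and dense for every nonempty open $V$, and transitivity follows with no appeal to contraction. The paper, by contrast, uses contraction already at this stage: it picks $k_{1}(x)\in U$ and $k_{2}(x)\in V$, finds a backward-orbit element $h_{i}$ sending a point $z_{i}\in U$ to $x$ with $z_{i}\to k_{1}(x)$, argues that $h_{i}(k_{1}(x))$ lands in the contracting neighbourhood $B$, applies a contracting element $g$ with $g(B)\subset B_{r}(x)$, and then pushes into $V$ via $k_{2}$, taking $h=k_{2}\circ g\circ h_{i}\circ k_{1}$. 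Your route sidesteps two delicate points in that chain (why $h_{i}(k_{1}(x))\in B$ when $h_{i}$ varies with $i$, and why the contracted set $g(B)$ sits near $x$ rather than merely having small diameter).

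For the orbital branch the paper is terse --- it says one ``applies the above argument for a countable basis of $M$'' and calls the conclusion immediate --- whereas your outline is explicit about the real obstacle: a forward-orbit point $y=k(x)$ need not itself have dense forward orbit, so the greedy construction cannot proceed naively. Your shadowing scheme is the right idea, but the bookkeeping you flag is not yet resolved, and the diagnosis is slightly off: the circularity is not that $h_{n}$ is ``a priori expansive'', but that the steering word you need \emph{after} $h_{n}$ depends on where $h_{n}(B)$ lands, which depends on $n$. The missing step is to pass once to a subsequence so that $h_{n}(B)\to\{p\}$ for a fixed $p\in M$ (compactness plus $\operatorname{diam} h_{n}(B)\to 0$). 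Then choose, independently of $n$: a point $z^{*}\in\mathcal{O}^{-}_{\mathcal{G}}(x)$ near $p$ with $g^{*}(z^{*})=x$; a point $z_{j}\in W_{j}\cap\mathcal{O}^{-}_{\mathcal{G}}(x)$ with $g_{j}(z_{j})=x$ (this supplies the return leg you left ``to be arranged''); and $k_{j}\in\mathcal{G}^{+}$ with $k_{j}(x)$ close to $z_{j}$. The composite $g_{j}\circ k_{j}\circ g^{*}$ is now fixed, and choosing $n$ large afterwards, the word $g_{j}\circ k_{j}\circ g^{*}\circ h_{n}$ carries all of $B$ through $W_{j}$ (at the intermediate step) and back into $B$. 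Iterating over $j$ gives the dense branch of $x$.
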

\begin{proof}
Suppose that $U$ and $V$ are two open subsets of $M$. Since the total forward orbit of $x$ is dense in $M$, there exist
$k_{1},k_{2}\in \mathcal{G}^{+}$ such that $k_{1}(x)\in U$ and $k_{2}(x)\in V$.\\
According to the density of $\mathcal{O}^{-}_{\mathcal{G}}(x)$, we can choose two sequences $\{z_{i}\}\subset U$ and $\{h_{i}\}\subset \mathcal{G}^{+}$ that satisfying the following property:
$h_{i}(z_{i})=x$ and $z_{i}$ converges to $k_{1}(x)$ when $i$ goes to infinity and
therefore, $h_{i}(k_{1}(x))$ is contained in a neighborhood $B$ of $x$, for large enough $i$.\\
On the other hand, continuity of $k_{2}$ and this fact that $k_{2}(x)\in V$ imply that $k_{2}(B_{r}(x))\subset V$, for some small $r>0$.\\
Now, by contractibility of $\mathcal{G}^{+}$ at $x$, one can find $g\in <\mathcal{G}>^{+}$ such that $g(B)\subset B_{r}(x)$,
and therefore $k_{2}(g(h_{i}(k_{1}(x))))\in V.$ Let us take $h:= k_{2}\circ g\circ h_{i}\circ k_{1}$, then $h(U)\cap V\neq\emptyset$.\\
Finally, if we apply the above argument for a countable basis of $M$, the second statement follows immediately.
\end{proof}
\begin{remark}
Let us take $\mathcal{G}^+ \subset Diff^{1+\alpha}(M)$ with generators $g_1, \ldots, g_k$ and there exists
$\{f_1, \ldots, f_{p+q}\} \subset \mathcal{G}^+$ satisfies the conditions proposed in subsection 2.1. Then Corollary 4.8 ensures that the inverse semi-group $\mathcal{G}^-=<g_1^{-1}, \ldots, g_k^{-1}>$ is contractive at each point $x \in \mathcal{H}$. Hence, Lemma 6.2 implies that if $\mathcal{G}^+$ possesses a point $x \in \mathcal{H}$ with forward and backward dense orbit, then both semi-groups $\mathcal{G}^+$ and $\mathcal{G}^-$ are transitive.
\end{remark}
\begin{lemma}
Suppose that $\{f_{1}, \ldots ,f_{k}\}$ is a finite family of homeomorphisms on $M$ and $\mathcal{G}^{+}$ is the semi-group generated by these maps.
If $\mathcal{G}^{+}$ has any point with forward orbital dense orbit, then the set of points with total forward dense orbit is a residual subset of $M$.
\end{lemma}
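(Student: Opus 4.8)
The plan is to exploit the existence of a single point $z$ with a forward orbital dense orbit and to ``spread'' this density to a residual set using the semi-group structure together with Baire's theorem. First I would recall that $M$ is compact, hence separable, so it admits a countable basis $\{U_n\}_{n\geq 1}$ of nonempty open sets. A point $x$ has total forward dense orbit precisely when for every $n$ there is some $h\in\mathcal{G}^+$ with $h(x)\in U_n$; equivalently, writing
$$W_n=\bigcup_{h\in\mathcal{G}^+}h^{-1}(U_n),$$
the set of points with total forward dense orbit equals $\bigcap_{n\geq 1}W_n$. Since each $h$ is a homeomorphism and each $U_n$ is open, every $W_n$ is open. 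So by Baire's theorem it suffices to prove that each $W_n$ is dense in $M$.

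For density of $W_n$, fix an arbitrary nonempty open set $V\subset M$; I must produce a point of $W_n$ inside $V$. Here is where the hypothesis enters: let $z$ be the point whose $\omega$-orbital branch $\{f_\omega^j(z):j\geq 0\}$ is dense in $M$ for some $\omega\in\Sigma_k^+$. By density there is an index $a$ with $f_\omega^a(z)\in V$, and another index $b>a$ with $f_\omega^b(z)\in U_n$. Setting $y=f_\omega^a(z)\in V$, the composition $g=f_{\omega_{b-1}}\circ\cdots\circ f_{\omega_a}\in\mathcal{G}^+$ satisfies $g(y)=f_\omega^b(z)\in U_n$, so $y\in V\cap W_n$. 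Since $V$ was arbitrary, $W_n$ is dense, and since $n$ was arbitrary, $\bigcap_{n\geq1}W_n$ is residual. This set is contained in (in fact, if one uses the full semi-group rather than a single branch, equal to) the set of points with total forward dense orbit, completing the proof.

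The only genuinely delicate point is the bookkeeping that lets me pass from ``$y$ has an orbital branch hitting $U_n$'' to ``$y\in W_n$'': one needs that a suffix of a fixed orbital word still corresponds to an element of $\mathcal{G}^+$, which is immediate from the definition $f_\omega^n=f_{\omega_{n-1}}\circ f_\omega^{n-1}$ and the fact that $\mathcal{G}^+$ is closed under composition. No expanding or smoothness hypotheses are used here; the statement is purely topological and rests entirely on compactness (to get a countable basis and invoke Baire) and on the semi-group being generated by homeomorphisms (so that preimages of open sets are open).
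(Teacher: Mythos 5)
Your proof is correct, but it takes a cleaner route than the paper's. You characterize the set of points with dense total forward orbit as $\bigcap_{n}W_n$ with $W_n=\bigcup_{h\in\mathcal{G}^+}h^{-1}(U_n)$, which is automatically open, and you verify density of each $W_n$ directly from the single dense orbital branch; this is the classical Birkhoff-transitivity argument transplanted to the semigroup setting, and it uses nothing beyond continuity of the generators and the Baire property of $M$. The paper instead builds its dense open sets as unions of small balls $B_{r_{n,l}}(x_{j_l})$ centered at points of the dense branch, using continuity of finite compositions to guarantee that every point of such a ball has a $\tfrac{1}{n}$-dense orbit segment whenever the center has a $\tfrac{1}{2n}$-dense one; that construction is more quantitative but also more laborious, and your version avoids the $\tfrac{1}{2n}$-versus-$\tfrac{1}{n}$ bookkeeping and the choice of the radii $r_{n,l}$ entirely. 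One small point you should make explicit: when you pick $b>a$ with $f_\omega^b(z)\in U_n$, you are implicitly using that the tail $\{f_\omega^j(z):j>a\}$ is still dense; this holds because it contains the dense set $\{f_\omega^j(z):j\geq 0\}$ minus finitely many points, and a dense subset of a manifold (a space without isolated points) remains dense after deleting finitely many points. With that one-line justification added, the argument is complete.
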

\begin{proof}
Suppose that there exists a point $x\in M$ and $\omega \in \Sigma^{+}_{k}$ such that the $\omega$-orbital orbit of $x$, $\mathcal{O}^{+}_{\mathcal{G}}(\omega, x)=\{f^{j}_{\omega}(x): j\in \mathbb{N}\}$, is dense in $M$. Let us take a countable basis $\mathcal{V}= \{ V_j : j \in \mathbb{N}\}$ of $M$ and we set
$$\mathcal{A}_n=\{z \in M: \mathcal{O}_{\mathcal{G}}^+(z) \ is \ \frac{1}{n}-dense\}.$$
By density of $\mathcal{O}^{+}_{\mathcal{G}}(\omega, x)$ there exist sequences $\{x_{j_{l}}\}\subset \mathcal{O}^{+}_{\mathcal{G}}(\omega, x)$ and $k_{n,l}\subset \mathbb{N}$, for each $n,l\in \mathbb{N}$, for which the following holds:\\
$x_{j_{l}}\in V_{l}$ and the segment orbit $\{x_{j_{l}}=f^{j_{l}}_{\omega}(x), f^{j_{l}+1}_{\omega}(x), \ldots f^{j_{l}+k_{n,l}}_{\omega}(x)\}$ is $\frac{1}{2n}$-dense. By continuity, there exists $\{r_{n,l}>0\}_{n,l\in \mathbb{N}}$ such that
 $f^{i}_{\sigma^{j_{l}-1}\omega}(B_{r_{n,l}}(x_{j_{l}}))\subset B_{\frac{1}{2n}}(f^{i}_{\sigma^{j_{l}-1}\omega}(x_{j_{l}}))$ for
 $0\leq i\leq k_{n,l}$. This means that for each $y\in B_{r_{n,l}}(x_{j_{l}})$, the segment orbit $\{y, f_{\sigma^{j_{l}-1}\omega}(y), \ldots ,  f^{k_{n,l}}_{\sigma^{j_{l}-1}\omega}(y)\}$ is $\frac{1}{n}$-dense.\\
 Therefore, $\cup_{l \in \mathbb{N}}B_{r_{n,l}}(x_{j_{l}})$ is an open and dense subset. In particular,
 this proves that $\cap_{n \in \mathbb{N}}\cup_{l \in \mathbb{N}}B_{r_{n,l}}(x_{j_{l}})$ is a residual subset contained in $\cap_{n=1}^{\infty}\mathcal{A}_n$ which consists of the points with dense forward orbit. This terminates the proof of the lemma.
 \end{proof}
\begin{lemma}
Let $\mathcal{F}$ be a finite family of homeomorphism on $M$. If there exists a point $x\in M$ with forward orbital dense orbit, then
$\{x\in M: \overline{\mathcal{O}^{-}_{\mathcal{F}}(x)}=\overline{\mathcal{O}^{+}_{\mathcal{F}}(x)}=M\}$ is a residual subset of $M$.
\end{lemma}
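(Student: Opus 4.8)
The plan is to combine the two preceding lemmas (Lemma 6.4 and Lemma 6.5) together with the contractibility afforded by Remark 6.3, but since the statement here is about an arbitrary finite family $\mathcal{F}$ of homeomorphisms — with no differentiability or expansion hypothesis — I must instead argue directly. First I would observe that the hypothesis gives a point $x\in M$ and a word $\omega\in\Sigma_k^+$ whose $\omega$-orbital branch $\mathcal{O}^+_{\mathcal F}(\omega,x)$ is dense; in particular $\mathcal{O}^+_{\mathcal F}(x)$ is dense, so by Lemma 6.5 the set $G^+ := \{y\in M : \overline{\mathcal{O}^+_{\mathcal F}(y)} = M\}$ is residual. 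The issue is that Lemma 6.5 handles only the \emph{forward} total orbit, so the real content is to produce a residual set of points whose \emph{backward} total orbit is also dense.

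For the backward direction I would run an argument dual to the proof of Lemma 6.5, but using the $\omega$-orbital branch as a source of preimages. Fix a countable basis $\mathcal V=\{V_j\}$ of $M$. Given $n$ and an open set $W$, I want to find, for a dense open set of points $y$, an element $h\in\mathcal{G}^+$ with $h^{-1}(y)\in W$, quantitatively enough to make $\mathcal{O}^-_{\mathcal F}(y)$ be $\tfrac1n$-dense. Since the $\omega$-orbital branch of $x$ is dense, for each target ball $V_l$ there are infinitely many times $j$ with $f^j_\omega(x)\in V_l$; fixing such a $j$ and writing $g_{j}:=f^j_\omega = f_{\omega_{j-1}}\circ\cdots\circ f_{\omega_0}\in\mathcal{G}^+$, we have $g_j(x)\in V_l$, i.e. $x\in g_j^{-1}(V_l)$, so $x\in\mathcal{O}^-_{\mathcal F}(z)$ for every $z\in V_l$ lying on the forward branch — more usefully, by continuity of $g_j$ there is a ball $B$ about $g_j(x)$ with $g_j^{-1}(B)\ni x$ contained in a prescribed neighborhood of $x$. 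Feeding $x$'s own forward-density back in, one builds, exactly as in Lemma 6.5 with the roles of forward iteration and preimage interchanged, an open dense set $U_{n}:=\bigcup_l B_{s_{n,l}}(p_{n,l})$ such that every $y\in U_n$ has $\mathcal{O}^-_{\mathcal F}(y)$ that is $\tfrac1n$-dense; then $G^-:=\bigcap_n U_n$ is residual and consists of points with dense backward total orbit.

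Finally I would intersect: $G^+\cap G^-$ is a residual subset of $M$ (residual sets form a filter in the Baire space $M$), and every $y$ in it satisfies $\overline{\mathcal{O}^-_{\mathcal F}(y)}=\overline{\mathcal{O}^+_{\mathcal F}(y)}=M$, which is the assertion. The main obstacle is the backward half: one must be careful that "backward total orbit" means $\{h^{-1}(y):h\in\mathcal{G}^+\}$ rather than the orbit under an inverse \emph{semigroup} of a single branch, so the preimages available at $y$ are indexed by all finite words, and one has to check that the $\tfrac1{2n}$-dense \emph{segment} of the $\omega$-branch of $x$ used in Lemma 6.5 can be transported to a $\tfrac1n$-dense set of preimages of a nearby point $y$ via a single continuity/shadowing estimate for the finitely many generators; uniform continuity on the compact manifold $M$ makes this go through, but it is the step that needs the most care.
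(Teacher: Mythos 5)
Your proposal is correct and follows essentially the same route as the paper: the forward half is quoted from the preceding lemma, and the backward half rests on the same key observation that the $\tfrac{1}{2n}$-dense initial segment $\{x, f_{\omega_0}(x),\dots, f^{k_n}_\omega(x)\}$ of the dense orbital branch lies in $\mathcal{O}^-_{\mathcal F}(x_j)$ for $j\ge k_n$ (via the suffix compositions $f_{\omega_{j-1}}\circ\cdots\circ f_{\omega_l}\in\mathcal{G}^+$), which continuity then spreads to an open neighborhood of each $x_j$, giving an open dense subset of $\mathcal{A}_n$ before intersecting over $n$ and with the forward residual set. The step you flag as delicate is exactly the one the paper carries out with the radii $r_j$ chosen so that $f^l_\omega(B_{r_j}(x))\subset B_{1/2n}(f^l_\omega(x))$ for all $l\le j$.
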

\begin{proof}
From the previous lemma, $\{x\in M: \overline{\mathcal{O}^{+}_{\mathcal{F}}(x)}=M\}$ is a residual subset of $M$. Let us take $x\in M$
with forward orbital dense orbit, so there exists $\omega \in \Sigma^{+}_{n}$ for which $\mathcal{O}^{+}_{\mathcal{F}}(x,\omega)$ is
dense in $M$, i.e, the set $\{x, f_{\omega_{0}}(x), \dots ,f^{j}_{\omega}(x), \dots\}$ is dense in $M$. Let us take $x_{l}=f^{l}_{\omega}(x)$
and $\mathcal{A}_{n}=\{x\in M: \mathcal{O}^{-}_{\mathcal{F}}(x)\ \  is\ \  \frac{1}{n}-dense\}$. Since $\mathcal{O}^{+}_{\mathcal{F}}(x,\omega)$ is dense, there exists $\{k_{n}\}$ such that $\{x, \ldots ,f^{k_{n}}_{\omega}(x)\}$ is $\frac{1}{2n}$-dense. As $\{x, \ldots , f^{k_{n}}_{\omega}(x)\}\subset \mathcal{O}^{-}(x_{j})$ for all $j\geq k_n$, we get that $x_{j}\in \mathcal{A}_{n}$ for each $j\geq k_{n}$.\\
By continuity, one can find $r_{j}>0$ such that $f^{l}_{\omega}(B_{r_{j}}(x))\subset B_{\frac{1}{2n}}(f^{l}_{\omega}(x))$,
for all $0\leq l\leq k_{j}$. This proves that $\mathcal{O}^{-}_{\mathcal{F}}(y)$ is $\frac{1}{n}$-dense for all
$y\in f^{j}_{\omega}(B_{r_{j}}(x))$ and $j\geq k_{n}$. As $f^{j}_{\omega}(B_{r_{j}}(x))$ is an open neighborhood of $x_{j}$ and $\{x_{j}: j\geq k_{n}\}$ is dense,
then $\cup _{j\geq k_{n}}f^{j}_{\omega}(B_{r_{j}}(x))\subset \mathcal{A}_{n}$
 is an open and dense subset $M$. Therefore
 $\cap_{n\geq1}\cup_{j\geq k_{n}}f^{j}_{\omega}(B_{r_{j}}(x))\subset \cap_{n\geq1}\mathcal{A}_{n}$
 is a residual subset of $M$. Since $\cap _{n\geq1}\mathcal{A}_{n}$ consists of points with dense backward orbit,
 then $\{x\in M : \overline{\mathcal{O}^{-}_{\mathcal{F}}(x)}=M\}\cap \{x\in M : \overline{\mathcal{O}^{+}_{\mathcal{F}}(x)}=M\}$ is also residual. This terminates the proof of the lemma.

\end{proof}
Let us recall that the a finitely generated semi-group $\mathcal{G}^+$ satisfies the \emph{weak cycle} property if for each open set $B\subset M$, there exists a sequence $\{h_{i}\}\subset \mathcal{G}^{+}$ such that $M\circeq \cup^{\infty}_{i=1}h_{i}(B)$, this means that $m(M\setminus \cup h_{i}(B))=0$.
\begin{lemma}
A finitely generated semi-group $\mathcal{G}^+$ of homeomorphisms on $M$ has weak cycle property if and only if there exists a full measure subset $\tilde{M}\subset M$ such that for each point $x\in \tilde{M}$, the total backward orbit of $x$ is dense in $M$.
\end{lemma}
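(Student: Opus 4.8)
The plan is to reduce both implications to a countability argument, using that $M$ is second countable (being a compact manifold) and that $\mathcal{G}^+$, being finitely generated, is a countable semi-group. First I would fix a countable basis $\{V_j\}_{j\in\mathbb{N}}$ of nonempty open sets for $M$ and record the elementary reformulation: the backward orbit $\mathcal{O}^-_{\mathcal{G}}(x)=\{h^{-1}(x):h\in\mathcal{G}^+\}$ is dense in $M$ if and only if for every $j$ there is some $h\in\mathcal{G}^+$ with $h^{-1}(x)\in V_j$, equivalently $x\in h(V_j)$; that is, $\mathcal{O}^-_{\mathcal{G}}(x)$ is dense iff $x\in\bigcup_{h\in\mathcal{G}^+}h(V_j)$ for every $j\in\mathbb{N}$.

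For the implication ``weak cycle property $\Rightarrow$ full-measure set of dense backward orbits'': for each $j$ apply the weak cycle property to the open set $V_j$ to obtain a sequence $\{h^{(j)}_i\}_{i\geq 1}\subset\mathcal{G}^+$ with $W_j:=\bigcup_{i\geq 1}h^{(j)}_i(V_j)$ of full Lebesgue measure. Put $\tilde{M}=\bigcap_{j\in\mathbb{N}}W_j$; being a countable intersection of full-measure sets, $\tilde{M}$ has full measure. For any $x\in\tilde{M}$ and any $j$ we have $x\in W_j\subset\bigcup_{h\in\mathcal{G}^+}h(V_j)$, so by the reformulation above the backward orbit of $x$ is dense in $M$.

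For the converse: let $\tilde{M}\subset M$ be a full-measure set all of whose points have dense backward orbit, and let $B\subset M$ be any nonempty open set. For every $x\in\tilde{M}$ the density of $\mathcal{O}^-_{\mathcal{G}}(x)$ forces some $h\in\mathcal{G}^+$ with $h^{-1}(x)\in B$, i.e. $x\in h(B)$; hence $\tilde{M}\subset\bigcup_{h\in\mathcal{G}^+}h(B)$. Since $\mathcal{G}^+$ is finitely generated, each of its elements is a finite word in the generators, so $\mathcal{G}^+$ is countable; enumerating $\mathcal{G}^+=\{h_i\}_{i\geq 1}$ we get $M\setminus\bigcup_{i\geq 1}h_i(B)\subset M\setminus\tilde{M}$, a set of measure zero. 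This is precisely the weak cycle property for $B$, and $B$ was arbitrary.

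The argument is essentially bookkeeping, so there is no serious obstacle; the only points that need a little care are verifying that a countable basis genuinely detects density (so that the single countable intersection over $j$ suffices in the first implication) and that $\mathcal{G}^+$ is truly countable. The latter is where finite generation is essential: without it the union $\bigcup_{h\in\mathcal{G}^+}h(B)$ need not be a countable union, and the weak cycle property---which demands a single sequence $\{h_i\}$---could genuinely fail even when almost every backward orbit is dense.
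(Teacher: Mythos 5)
Your argument is correct and follows essentially the same route as the paper: a countable basis together with a countable intersection of full-measure sets $\bigcap_j W_j$ for the forward implication, and a countable union of images $h(B)$ covering $\tilde{M}$ for the converse. The only (harmless) difference is in the converse, where you invoke countability of the finitely generated semi-group to make $\bigcup_{h\in\mathcal{G}^+}h(B)$ a countable union directly, while the paper extracts a countable subcover of the open cover $\{h_x(B)\}_{x\in\tilde{M}}$ using second countability of $M$; both produce the required sequence $\{h_i\}$.
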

\begin{proof}
First, suppose that $\mathcal{G}^+$ has weak cycle property. Let us take a countable basis $\{V_{j}: j\geq 1\}$ of $M$. Weak cycle property implies that there exists a sequence $\{h_{ij}\}_{i,j\in \mathbb{N}}\subset \mathcal{G}^+$ so that
 $M\circeq \cup^{\infty}_{i=1}h_{ij}(V_{j})$, for each $j\in \mathbb{N}$.\\ Suppose that $F_{j}=M\setminus \cup_{i}h_{ij}(V_{j})$, then
$F_{j}$ has zero Lebesgue measure, hence $F=\cup_{j}F_{j}$ has also zero Lebesgue measure. We set $\tilde{M}=M\setminus F$ and we show that each $x\in \tilde{M}$ has a dense backward orbit.\\ Indeed, if $x\in \tilde{M}$ then $x$ is not belong to $F$, which implies that $x$ is not belong to $F_{j}$, for each $j$. This means that $x\in h_{ij}(V_{j})$ for each $i$. Therefore $h^{-1}_{ij}(x)\in V_{j}$ and hence the backward orbit of $x$ is dense in $M$.

Conversely, suppose that there exists a full measure subset $\tilde{M}\subset M$ so that each $x\in \tilde{M}$ has dense backward orbit. This means that for each open set $B$ there exists $h_{x}\in \mathcal{G}^{+}$ with $h^{-1}_{x}(x)\in B$. Therefore $x\in h_{x}(B)$. Then, the family $\{h_{x}(B): x\in \tilde{M}\}$ is an open covering of $\tilde{M}$ and hence there exists a countable subcover $\{h_{x_{i}}(B): i\in \mathbb{N}\}$. If we set $h_{i}:= h_{x_{i}}$, then we conclude that $M\circeq \cup_{i}h_{i}(B)$.
\end{proof}
\section{\bf{Examples}}
In this section, we exhibit some examples of finitely generated semigroups of $Diff^{1+\alpha}(M)$ satisfying our hypothesis in subsection 2.1.
 To start the construction, we recall some concepts of topology.

The \emph{standard} $m$-\emph{simplex} is the set
\begin{center}
$\triangle_m=\{x \in\mathbb{R}^{m+1}:x_i\geq0 \ and \ \sum_{i=0}^{m+1}x_i=1\}.$
\end{center}

A \emph{general} $m$-\emph{simplex} is a subset of $M$ diffeomorphic to the standard $m$-simplex $\triangle_m$ and a \emph{general} $n$-\emph{face} is a subset of $M$ diffeomorphic to the standard $n$-face.

A \emph{triangulation} of a compact $m$- manifold $M$ is a finite collection $\mathcal{T}$ of diffeomorphic images of $\triangle_m$ that cover $M$ and satisfying the following condition:
for any general $m$-simplices $S_i,S_j\in \mathcal{T}$, if $S_i\cap S_j\neq\emptyset$, their intersection is a $(m-1)$-face of both $S_i$ and $S_j$.
\\
The \emph{barycentric subdivision} of an $m$-dimensional simplex $S$ consists of $(m+1)!$ simplices. Each piece, with vertices $v_0,\ldots, v_m$ can be associated with a permutation $p_0,\ldots, p_m$ of the vertices of $S$, in such a way that each vertex $v_i$ is the barycenter of the points $p_0,\ldots, p_i$. Barycentric subdivision is an important tool which is used to obtain finer simplicial complexes.

Now, we start by considering any smooth Riemannian manifold $M$ that supports orbital expanding or orbital non-uniformly expanding finitely generated semigroups of $Diff^{1+\alpha}(M)$.
We are going to introduce a finite family of $ Diff^{1+\alpha}(M)$
 that admits a Markov partition satisfying conditions $(A_0), (A_1), (A_2)$ and $(A_3)$.

It is a well-known fact that $M$ possesses a triangulation $\mathcal{T}=\{S_1, \ldots, S_k\}$ , where $S_i, \  i=1, \ldots, k$, are diffeomorphic images of $\triangle_m$ and cover $M$.
Suppose that $g_i, \ i=1, \ldots, k$, are diffemorphisms which map $\triangle_m$ to $S_i$.
\\
We divide $\triangle_m$ by barycentric subdivision to smaller $m$- dimensional $m$-simplices $\triangle_{m,1}, \ldots, \triangle_{m,l}$. Take affine maps $h_j, j=1, \ldots, l$, which map $\triangle_{m,j}$ to $\triangle_m$.
\\
Clearly, each $h_j$ is an expanding map. Now, we set $f_{i,j}:= g_i \circ h_j\circ g_i^{-1}$ and extend it to a $C^{1+\alpha}$-diffeomorphism on $M$. Also, take $S_{i.j}:= g_i(\triangle_{m,j})$, then the interiors of $S_{i,j}$, $i=1, \ldots, k$ , $j=1, \ldots, l$ exhibit a Markove partition, that is
if we take $R_{i,j}:= int(S_{i,j})$, then $(R_{i,j},f_{i,j})$ satisfying conditions $(A0), \ldots, (A3)$, with $p=kl$ and $q=0$. Clearly, the semigroup generated by $\{f_{i.j}\}$ is an orbital expanding finitely generated $C^{1+\alpha}$ semi-group.

Now, we will give some changes in the subdivision corresponding to one of generalized $m$-simplices that allows us to exhibit a finitely generated semi-group of $C^{1+\alpha}$-diffeomorphisms on any smooth manifold that is orbital non-uniformly expanding.

Again, we start with a triangulation $\mathcal{T}$ on a smooth compact Riemannian manifold $M$ and choose a generalized $m$-simplex $S^*\in \mathcal{T}$. Let $\{\triangle_{m,1}, \ldots, \triangle_{m,l}\}$ be a subdivision of standard $m$-simplex $\triangle_m$ in the following sense: there exists a sub-simplex $\triangle_{m,j}$ which admits the entire of an $(m-1)$-face of $\triangle_m$ as a face but other faces are strictly smaller than the faces of $\triangle_m$. Moreover, other sub-simplicies are smaller than $\triangle_m$ in all direction.

We take $h_*$ an affine transformation which maps $\triangle_{m,j}$ to $\triangle_m$. Also, suppose that $h_i$, $i=1, \ldots, l$, $i\neq j$, are affine transformations which map $\triangle_{m,i}$ to $\triangle_m$.
\\
Let $g$ be a diffeomorphism which takes  $\triangle_m$ diffeomorphically to $S^*$.
\\
We set $S_j^*:= g(\triangle_{m,j})$, $R_j^*:= int(S_j^*)$ and $f_j^*:= g \circ h^*\circ g^{-1} \mid_{S^*_j}$.
Also, we take $S_i^*:= g(\triangle_{m,i})$, $R_i^*:= int(S_i^*)$ and $f_i^*:= g \circ h_i\circ g^{-1} \mid_{S_i^*}$, for $i=1, \ldots, l$, with $i\neq j$.
\\
Clearly, this subdivision is not barycentric. Other elements of $\mathcal{T}$ admit barycentric subdivisions with the affine transformations which map them to the whole of standard $m$-simplex, analogous to the previous example.
It is easy to see that this partition is also Markov and  satisfies the conditions $(A0)$, $(A1)$, $(A2)$ and $(A3)$ with $q=1$.
\section*{Acknowledgments} We are grateful to M. Nassiri, A. Fakhari and M.Saleh for many useful conversations and valuable suggestions.

\end{document}